\def\A{\mathcal A}
\def\B{\mathscr B}
\def\C{\mathbb C}
\def\d{\mathrm{d}}
\def\D{\mathscr D}
\def\H{\mathcal H}
\def\L{\mathscr L}
\def\M{\mathsf M}
\def\N{\mathbb N}
\def\R{\mathbb R}
\def\S{\mathbb S}
\def\SU{\mathsf{SU}}
\def\T{\mathbb T}
\def\U{\mathsf U}
\def\Z{\mathbb Z}
\def\dom{\mathcal D}
\def\Ran{\mathop{\mathsf{Ran}}\nolimits}
\def\e{\mathop{\mathrm{e}}\nolimits}
\def\linf{\mathsf{L}^{\:\!\!\infty}}
\def\ltwo{\mathsf{L}^{\:\!\!2}}
\DeclareMathOperator*{\slim}{s\;\!-lim\;\!}
\newtheorem{Theorem}{Theorem}[section]
\newtheorem{Remark}[Theorem]{Remark}
\newtheorem{Corollary}[Theorem]{Corollary}
\newtheorem{Example}[Theorem]{Example}
\begin{document}

%--------------------------------------------------------------------------------------
% Title
%--------------------------------------------------------------------------------------

\title{Commutator criteria for strong mixing}

\author{R. Tiedra de Aldecoa\footnote{Supported by the Chilean
Fondecyt Grant 1130168 and by the Iniciativa Cientifica Milenio ICM RC120002
``Mathematical Physics'' from the Chilean Ministry of Economy.}}

\date{\small}
\maketitle \vspace{-1cm}

\begin{quote}
\emph{
\begin{itemize}
\item[] Facultad de Matem\'aticas, Pontificia Universidad Cat\'olica de Chile,\\
Av. Vicu\~na Mackenna 4860, Santiago, Chile
\item[] \emph{E-mail:} rtiedra@mat.puc.cl
\end{itemize}
}
\end{quote}

%--------------------------------------------------------------------------------------

\begin{abstract}
We present new criteria, based on commutator methods, for the strong mixing property
of discrete flows $\{U^N\}_{N\in\Z}$ and continuous flows $\{\e^{-itH}\}_{t\in\R}$
induced by unitary operators $U$ and self-adjoint operators $H$ in a Hilbert space
$\H$. Our approach put into evidence a general definition for the topological degree
of the curves $N\mapsto U^N$ and $t\mapsto \e^{-itH}$ in the unitary group of $\H$.
Among other examples, our results apply to skew products of compact Lie groups, time
changes of horocycle flows and adjacency operators on graphs.
\end{abstract}

\textbf{2010 Mathematics Subject Classification:} 37A25, 58J51, 81Q10.

\smallskip

\textbf{Keywords:} Strong mixing, commutator methods, topological degree,
skew products, horocycle flow, adjacency operators

%--------------------------------------------------------------------------------------
\section{Introduction}
\setcounter{equation}{0}
%--------------------------------------------------------------------------------------

In a recent series of papers \cite{FRT13,Tie14,Tie13,Tie12}, it has been shown that
commutator methods can be used to study the absolutely continuous spectrum of various
classes of ergodic dynamical systems. In the present work, we pursue this approach by
showing that the property of strong mixing can also be established using commutator
methods. More specifically, we present new criteria in terms of commutators for the
strong mixing property of two abstract classes of dynamical systems\;\!: discrete
flows $\{U^N\}_{N\in\Z}$ induced by a unitary operator $U$ and continuous flows
$\{\e^{-itH}\}_{t\in\R}$ induced by a self-adjoint operator $H$.

In the unitary case, our set-up is the following. Assume that $U$ and $A$ are
respectively a unitary and a self-adjoint operator in a Hilbert space $\H$, with $U$
such that the commutator $[A,U]$ exists and is bounded in some precise sense. Then,
the operator $[A,U]U^{-1}$ is bounded and self-adjoint, and one can define for all
$N\in\N_{\ge1}$ the Birkhoff averages
$\frac1N\sum_{n=0}^{N-1}U^n\big([A,U]U^{-1}\big)U^{-n}$. The strong limit of these
averages, namely the operator
$$
D:=\slim_{N\to\infty}\frac1N\sum_{n=0}^{N-1}U^n\big([A,U]U^{-1}\big)U^{-n},
$$
is (if it exists) a very interesting object. On the first hand, we explain why the
operator $D$ can be interpreted as a topological degree of the curve $N\mapsto U^N$ in
the unitary group $\U(\H)$ of $\H$ (Remark \ref{Rem_unitary}(b)). On the second hand,
we prove the following result (see Theorem \ref{thm_unitary} for a precise
statement): If the topological degree $D$ exists and satisfies some weak
regularity assumption with respect to $A$, then $U$ is strongly mixing and has purely
continuous spectrum in the subspace $\ker(D)^\perp$. (In some cases, we can even prove
that $U$ has purely absolutely continuous spectrum in the subspace $\ker(D)^\perp$,
see Remark \ref{Rem_unitary}(a) and Corollary \ref{second_cor_unitary}). This result
on strong mixing extends to a completely general hilbertian setting some related
result in the framework of skew products of compact Lie groups (see
\cite[Rem.~3.6]{Tie14} and references therein). Moreover, since it applies in
particular to Koopman operators, it provides a surprisingly simple and general
criterion to determine when an invertible measure-preserving transformation is
strongly mixing and has purely continuous spectrum in some subspace. Analogues of all
these results are also obtained in the case of a continuous flow
$\{\e^{-itH}\}_{t\in\R}$ induced by a self-adjoint operator $H$ (see Section
\ref{Sec_self}).

As an application of our results, we prove the strong mixing property or obtain purely
absolutely continuous spectrum in appropriate subspaces for operators $U$ and $A$
satisfying the Weyl commutation relation (Example \ref{ex_can_unitary}), for skew
products of compact Lie groups (Example \ref{ex_skew}), for operators $H$ and $A$
satisfying the Weyl commutation relation (Example \ref{ex_can_self}), for operators
$H$ and $A$ satisfying the homogeneous commutation relation (Example \ref{ex_hom}),
for adjacency operators on graphs (Example \ref{ex_graphs}), and for time changes of
horocycle flows on compact surfaces of constant negative curvature (Example
\ref{ex_horocycle}). In all of these examples, we obtain new results, or re-obtain
known results, with very simple and short proofs. In the case of skew products of
compact Lie groups, our results extend previous results of K. Fr{\c{a}}czek, A.
Iwanik, M. Lema\'nzyk, D. Rudolph and the author \cite{Fra00,Fra00_2,ILR93,Tie14}. In
the case of horocycle flows, we obtain a very simple proof that all time changes of
horocycle flows of class $C^1$ on compact surfaces of constant negative curvature are
strongly mixing. This complements classical results on horocycle flows of A. G.
Kushnirenko \cite{Kus74} and B. Marcus \cite{Mar77}. The diversity of the examples
covered in our paper and the simplicity of our proofs suggest that our results could
be applied to various other models in ergodic theory and spectral theory.

Here is a brief description of the content of the paper. In Section
\ref{Sec_commutators}, we recall the needed facts on commutators of operators and
regularity classes associated to them. In Section \ref{Sec_unitary}, we prove our
result on the strong mixing property for discrete flows $\{U^N\}_{N\in\Z}$ (Theorem
\ref{thm_unitary}) and give an interpretation of it in terms of the topological degree
$D$ (Remark \ref{Rem_unitary}(b)). We also present the application to skew products of
compact Lie groups (Example \ref{ex_skew}). In Section \ref{Sec_self}, we prove our
result on the strong mixing property for continuous flows $\{\e^{-itH}\}_{t\in\R}$
(Theorem \ref{thm_self}) and present the applications to adjacency operators on graphs
(Example \ref{ex_graphs}) and time changes of horocycle flows (Example
\ref{ex_horocycle}).

To conclude, we note that it would be interesting to see if the commutator criteria of
this paper admit an analogue for $C^*$-dynamical systems. Namely, given a triple
$\{\A,G,\alpha\}$ with $\A$ a $C^*$-algebra, $G$ a locally compact group and $\alpha$
a strongly continuous representation of $G$ in the automorphism group of $\A$, can we
find reasonable conditions on $\alpha$ in terms of an auxiliary map on $\A$ (maybe a
derivation of $\A$ replacing the map $[A,\;\!\cdot\;\!]$ appearing in our set-up)
guaranteeing that $\alpha$ is strong mixing?\\

\noindent
{\bf Acknowledgements.}
The author thanks K. Gelfert and M. Lema\'nzyk for interesting discussions on strong
mixing. The author is also grateful for their respective hospitality at the Federal
University of Rio de Janeiro in April 2014 and at the Nicolaus Copernicus University
in May 2014.

%--------------------------------------------------------------------------------------
\section{Commutators and regularity classes}\label{Sec_commutators}
\setcounter{equation}{0}
%--------------------------------------------------------------------------------------

We recall in this section some basic facts on commutators of operators and regularity
classes associated to them. We refer the reader to \cite[Chap.~5-6]{ABG96} for more
details.

Let $\H$ be a Hilbert space with scalar product
$\langle\;\!\cdot\;\!,\;\!\cdot\;\!\rangle$ antilinear in the first argument, denote
by $\B(\H)$ the set of bounded linear operators on $\H$, and write $\|\;\!\cdot\;\!\|$
both for the norm on $\H$ and the norm on $\B(\H)$. Let $A$ be a self-adjoint operator
in $\H$ with domain $\dom(A)$, and take $S\in\B(\H)$. For any $k\in\N$, we say that
$S$ belongs to $C^k(A)$, with notation $S\in C^k(A)$, if the map
\begin{equation}\label{eq_group}
\R\ni t\mapsto\e^{-itA}S\e^{itA}\in\B(\H)
\end{equation}
is strongly of class $C^k$. In the case $k=1$, one has $S\in C^1(A)$ if and only if
the quadratic form
$$
\dom(A)\ni\varphi\mapsto\big\langle\varphi,iSA\;\!\varphi\big\rangle
-\big\langle A\;\!\varphi,iS\varphi\big\rangle\in\C
$$
is continuous for the topology induced by $\H$ on $\dom(A)$. We denote by $[iS,A]$ the
bounded operator associated with the continuous extension of this form, or
equivalently the strong derivative of the function \eqref{eq_group} at $t=0$.
Moreover, if we set $A_\varepsilon:=(i\varepsilon)^{-1}(\e^{i\varepsilon A}-1)$ for
$\varepsilon\in\R\setminus\{0\}$, then we have that 
$\slim_{\varepsilon\searrow0}[iS,A_\varepsilon]=[iS,A]$ (see
\cite[Lemma~6.2.3(a)]{ABG96}).

Now, if $H$ is a self-adjoint operator in $\H$ with domain $\dom(H)$ and spectrum
$\sigma(H)$, we say that $H$ is of class $C^k(A)$ if $(H-z)^{-1}\in C^k(A)$ for some 
$z\in\C\setminus\sigma(H)$. In particular, $H$ is of class $C^1(A)$ if and only if the
quadratic form
$$
\dom(A)\ni\varphi\mapsto
\big\langle\varphi,(H-z)^{-1}A\hspace{1pt}\varphi\big\rangle
-\big\langle A\hspace{1pt}\varphi,(H-z)^{-1}\varphi\big\rangle\in\C
$$
extends continuously to a bounded form defined by the operator
$[(H-z)^{-1},A]\in\B(\H)$. In such a case, the set $\dom(H)\cap\dom(A)$ is a core for
$H$ and the quadratic form
$$
\dom(H)\cap\dom(A)\ni\varphi\mapsto\big\langle H\varphi,A\hspace{1pt}\varphi\big\rangle
-\big\langle A\hspace{1pt}\varphi,H\varphi\big\rangle\in\C
$$
is continuous in the topology of $\dom(H)$ (see \cite[Thm.~6.2.10(b)]{ABG96}). This
form then extends uniquely to a continuous quadratic form on $\dom(H)$ which can be
identified with a symmetric continuous operator $[H,A]$ from $\dom(H)$ to the adjoint
space $\dom(H)^*$. In addition, the following relation holds in $\B(\H)$ (see
\cite[Thm.~6.2.10(b)]{ABG96}):
$$
\big[(H-z)^{-1},A\big]=-(H-z)^{-1}[H,A](H-z)^{-1}.
$$

%--------------------------------------------------------------------------------------
\section{Strong mixing for discrete flows}\label{Sec_unitary}
\setcounter{equation}{0}
%--------------------------------------------------------------------------------------

We present in this section new criteria for the strong mixing property of a discrete
flow $\{U^N\}_{N\in\Z}$ induced by a unitary operator $U$ in a Hilbert space $\H$. We
start with the main theorem of the section, then present some of its corollaries, and
finally give some examples of applications.

We use the notation $\S^1:=\{z\in\C\mid|z|=1\}$ for the unit circle in the complex
plane and the notation $C^\infty_{\rm c}(\R\setminus\{0\})$ for the set of $C^\infty$
functions with compact support in $\R\setminus\{0\}$.

\begin{Theorem}[Strong mixing for discrete flows]\label{thm_unitary}
Let $U$ and $A$ be respectively a unitary and a self-adjoint operator in a Hilbert
space $\H$, with $U\in C^1(A)$. Assume that the strong limit
$$
D:=\slim_{N\to\infty}\frac1N\sum_{n=0}^{N-1}U^n\big([A,U]U^{-1}\big)U^{-n}
$$
exists, and suppose that $D^{-1}\eta(D)\;\!\dom(A)\subset\dom(A)$ for each
$\eta\in C^\infty_{\rm c}(\R\setminus\{0\})$. Then,
\begin{enumerate}
\item[(a)] $\lim_{N\to\infty}\big\langle\varphi,U^N\psi\big\rangle=0$ for each
$\varphi\in\ker(D)^\perp$ and $\psi\in\H$,
\item[(b)] $U|_{\ker(D)^\perp}$ has purely continuous spectrum.
\end{enumerate}
\end{Theorem}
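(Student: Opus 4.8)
The plan is to reduce strong mixing and continuity of the spectrum to a Mourre-type/commutator positivity statement for the operator $D$. First observe that since $D$ is defined as a strong limit of Birkhoff averages of $U^n([A,U]U^{-1})U^{-n}$, and $[A,U]U^{-1}$ is bounded and self-adjoint, $D$ is bounded, self-adjoint, and commutes with $U$; in particular $\ker(D)$ and $\ker(D)^\perp$ are both reducing subspaces for $U$, so $U|_{\ker(D)^\perp}$ makes sense. The key algebraic identity to extract is that $D$ plays the role of a ``derivative'' of $U$ along the $A$-flow in a Cesàro sense: one expects something like $U^{-1}[A,U] \equiv D$ modulo commutators, or more precisely that $U D U^{-1} = D$ together with a relation of the form $\tfrac1N\sum_{n=0}^{N-1} U^n(U^{-1}[A,U])U^{-n} \to D$ being used to control $\langle\varphi, U^N\psi\rangle$ via a discrete integration-by-parts (Abel summation) against $A$.

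Next I would run the standard argument that turns a positive commutator into decay of matrix elements. The hypothesis $D^{-1}\eta(D)\dom(A)\subset\dom(A)$ for $\eta\in C^\infty_{\rm c}(\R\setminus\{0\})$ is exactly what is needed to make sense of $\eta(D)A\eta(D)$-type expressions and to localize away from $\ker(D)$: for such $\eta$, the operator $\eta(D)$ maps into (a dense subspace of) $\ker(D)^\perp$, and on its range $D$ is boundedly invertible. The strategy for part (a) is then: it suffices to prove $\lim_{N\to\infty}\langle\varphi, U^N\psi\rangle = 0$ for $\varphi = \eta(D)\varphi_0$ with $\eta\in C^\infty_{\rm c}(\R\setminus\{0\})$ and $\varphi_0,\psi$ in a dense set, since such $\varphi$ are dense in $\ker(D)^\perp$ and the matrix elements are uniformly bounded by $\|\varphi\|\,\|\psi\|$. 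For such $\varphi$, write $U^N$ acting between $\eta(D)\varphi_0$ and $\psi$, insert $D^{-1}D$ using invertibility of $D$ on $\Ran\eta(D)$, and use $UDU^{-1}=D$ to telescope: the Cesàro definition of $D$ should let us replace one factor of $D$ by an average $\tfrac1N\sum U^n([A,U]U^{-1})U^{-n}$, and then the commutator $[A,U]=AU-UA$ produces a telescoping/Abel sum in which $A$ appears sandwiched between powers of $U$; collecting terms yields a bound of the form $|\langle\varphi, U^N\psi\rangle| \le \tfrac{C}{N}\|A\eta_1(D)\varphi_0\|\,\|\psi\| + (\text{error} \to 0)$, where $\eta_1$ is another cutoff with $\eta_1 D^{-1}\eta$ bounded and $\eta_1(D)\varphi_0\in\dom(A)$ by hypothesis.

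For part (b), recall that purely continuous spectrum of a unitary operator $U$ on a subspace $\K$ is equivalent, by Wiener's theorem, to $\lim_{N\to\infty}\tfrac1N\sum_{n=0}^{N-1}|\langle\varphi, U^n\varphi\rangle|^2 = 0$ for all $\varphi\in\K$; since part (a) gives the stronger statement that $\langle\varphi,U^N\psi\rangle\to 0$ (not merely Cesàro), the absence of eigenvalues is immediate — an eigenvector $\psi$ with $U\psi=\lambda\psi$, $|\lambda|=1$, lying in $\ker(D)^\perp$ would give $|\langle\psi,U^N\psi\rangle| = \|\psi\|^2$ for all $N$, contradicting (a) with $\varphi=\psi$. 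Hence $U|_{\ker(D)^\perp}$ has no eigenvalues, i.e.\ purely continuous spectrum. I expect the main obstacle to be the rigorous bookkeeping in part (a): justifying the insertion of the unbounded operator $A$ (controlling domains via the $C^1(A)$ hypothesis and the invariance assumption $D^{-1}\eta(D)\dom(A)\subset\dom(A)$), controlling the convergence of the Birkhoff averages uniformly enough to commute the limit in $N$ with the $A$-manipulations, and showing the remainder terms genuinely vanish as $N\to\infty$ rather than merely staying bounded. The regularity class machinery from Section~\ref{Sec_commutators} (in particular the approximation $[iS,A_\varepsilon]\to[iS,A]$ strongly) is presumably what makes these manipulations legitimate.
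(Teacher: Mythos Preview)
Your proposal is correct and follows essentially the same route as the paper: the ``telescoping/Abel sum'' you describe is precisely the identity $[A,U^N]=N D_N U^N$ with $D_N:=\tfrac1N\sum_{n=0}^{N-1}U^n([A,U]U^{-1})U^{-n}$, and the paper then inserts $D^{-1}D$ on $\varphi=\eta(D)\varphi\in\dom(A)$, splits $D=D_N+(D-D_N)$, and bounds exactly as you outline, with part~(b) handled by the same eigenvector contradiction. Your worries about the $A_\varepsilon$ approximation are unnecessary here (that device is only used in the continuous-flow case), and the only domain bookkeeping needed is that $D^{-1}\eta(D)\varphi\in\dom(A)$ and $\psi\in\dom(A)$, both of which you already identified.
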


Before the proof, we note that $[A,U]U^{-1}$ is bounded and self-adjoint. Thus, all
the operators $\frac1N\sum_{n=0}^{N-1}U^n\big([A,U]U^{-1}\big)U^{-n}$ are bounded and
self-adjoint, and so is their strong limit $D$. It follows that the operators
$D^{-1}\eta(D)$ with $\eta\in C^\infty_{\rm c}(\R\setminus\{0\})$ are well-defined by
functional calculus. We also note that $DU^n=U^nD$ for each $n\in\Z$. So,
$\ker(D)^\perp$ is a reducing subspace for $U$, and $U|_{\ker(D)^\perp}$ is a
well-defined unitary operator (see \cite[Ex.~5.39(b)]{Wei80}). Finally, we note that
the result of point (b) share some similarities with the criterion for the absence of
eigenvalues provided by the Virial theorem for unitary operators (see
\cite[Cor.~2.4~\&~Rem.~2.8]{FRT13}).

\begin{proof}[Proof of Theorem \ref{thm_unitary}]
(a) Since $U\in C^1(A)$, one has $U^N\in C^1(A)$ for each $N\in\N_{\ge1}$, and
\begin{align*}
\big[A,U^N\big]
=\sum_{n=0}^{N-1}U^{N-1-n}[A,U]U^n
&=\left(\sum_{n=0}^{N-1}U^{N-1-n}\big([A,U]U^{-1}\big)U^{n+1-N}\right)U^N\\
&=\left(\sum_{n=0}^{N-1}U^n\big([A,U]U^{-1}\big)U^{-n}\right)U^N.
\end{align*}
Therefore, one obtains that
\begin{equation}\label{eq_com_unitary}
\big[A,U^N\big]=ND_NU^N
\quad\hbox{with}\quad
D_N:=\frac1N\sum_{n=0}^{N-1}U^n\big([A,U]U^{-1}\big)U^{-n}.
\end{equation}
Now, let
$$
\D:=\big\{\varphi\in\dom(A)\mid\varphi=\eta(D)\varphi
~\hbox{for some}~\eta\in C^\infty_{\rm c}(\R\setminus\{0\})\big\},
$$
and take $\varphi\equiv\eta(D)\varphi\in\D$ and $\psi\in\dom(A)$. Then, the assumption
$D^{-1}\eta(D)\;\!\dom(A)\subset\dom(A)$ and \eqref{eq_com_unitary} imply for each
$N\in\N_{\ge1}$ that
\begin{align}
&\big|\big\langle\varphi,U^N\psi\big\rangle\big|\nonumber\\
&=\big|\big\langle DD^{-1}\eta(D)\varphi,
U^N\psi\big\rangle\big|\nonumber\\
&\le\big|\big\langle(D_N-D)D^{-1}\eta(D)\varphi,
U^N\psi\big\rangle\big|
+\big|\big\langle D^{-1}\eta(D)\varphi,D_NU^N\psi\big\rangle\big|\nonumber\\
&\le\big\|(D_N-D)D^{-1}\eta(D)\varphi\big\|\cdot\|\psi\|
+\frac1N\;\!
\big|\big\langle D^{-1}\eta(D)\varphi,\big[A,U^N\big]\psi\big\rangle\big|\nonumber\\
&=\big\|(D_N-D)D^{-1}\eta(D)\varphi\big\|\cdot\|\psi\|
+\frac1N\;\!\big|\big\langle AD^{-1}\eta(D)\varphi,U^N\psi\big\rangle
-\big\langle D^{-1}\eta(D)\varphi,U^NA\;\!\psi\big\rangle\big|\nonumber\\
&\le\big\|(D_N-D)D^{-1}\eta(D)\varphi\big\|\cdot\|\psi\|
+\frac1N\Big(\big\|AD^{-1}\eta(D)\varphi\big\|\cdot\|\psi\|
+\big\|D^{-1}\eta(D)\varphi\big\|\cdot\|A\;\!\psi\|\Big).
\label{eq_bound_unitary}
\end{align}
Since $\slim_{N\to\infty}D_N=D$, one infers that 
$\lim_{N\to\infty}\big\langle\varphi,U^N\psi\big\rangle=0$ for each $\varphi\in\D$ and
$\psi\in\dom(A)$, and thus the claim follows by the density of $\D$ in $\ker(D)^\perp$
and the density of $\dom(A)$ in $\H$.

(b) Suppose by absurd there exist $z\in\S^1$ and
$\varphi\in\ker(D)^\perp\setminus\{0\}$ such that $U\varphi=z\varphi$. Then, point (a)
implies that
$$
0
=\lim_{N\to\infty}\big|\big\langle\varphi,U^N\varphi\big\rangle\big|
=\lim_{N\to\infty}|z|^N\cdot\|\varphi\|^2
=\|\varphi\|^2
\ne0,
$$
which is a contradiction.
\end{proof}

\begin{Remark}\label{Rem_unitary}
(a) If we make in Theorem \ref{thm_unitary} the extra assumption that
$$
\sum_{N\ge1}
\left\|\left(\frac1N\sum_{n=0}^{N-1}U^n\big([A,U]U^{-1}\big)U^{-n}
-D\right)\varphi\right\|^2<\infty
~~\hbox{for each $\varphi\in\dom(A)$,}
$$
then we infer from \eqref{eq_bound_unitary} that 
$\sum_{N\ge1}\big|\big\langle\varphi,U^N\varphi\big\rangle\big|^2<\infty$ for each
$\varphi\in\D$. So, each $\varphi\in\D$ belongs to the absolutely continuous subspace
$\H_{\rm ac}(U)$ of $U$ (see the proof of \cite[Lemma~5]{ILR93}), and thus
$U|_{\ker(D)^\perp}$ has purely absolutely continuous spectrum since $\D$ is dense in
$\ker(D)^\perp$ and $\H_{\rm ac}(U)$ is closed in $\H$.

(b) The operator $D$ can be interpreted as a topological degree of the curve
$N\mapsto U^N$ in the unitary group $\U(\H)$ of $\H$. Indeed, one has
$$
D
=\slim_{N\to\infty}\frac1N\sum_{n=0}^{N-1}U^n\big([A,U]U^{-1}\big)U^{-n}
=\slim_{N\to\infty}\frac1N\big[A,U^N\big]U^{-N}.
$$
Therefore, if one considers the map $[A,\;\!\cdot\;\!]$ as a derivation on the set 
$\{U^N\}_{N\in\Z}\subset\U(\H)$, then $D$ can be interpreted as a renormalised winding
number of the curve $N\mapsto U^N$ in $\U(\H)$ (the logaritmic derivative
$\frac{\d z}z$ in the usual definition of winding number is replaced by the
``logaritmic derivative'' $\big[A,U^N\big]U^{-N}$ associated to $[A,\;\!\cdot\;\!]$).
Accordingly, one can rephrase the result of Theorem \ref{thm_unitary}(a) in the
following more intuitive terms\;\!: If $U$ is regular enough with respect to the
auxiliary operator $A$, and if the topological degree $D$ exists, then the flow
$\{U^N\}_{N\in\Z}$ is strongly mixing in the subspace where $D$ is nonzero. This
result extends to a completely general hilbertian setting some related result in the
framework of skew products of compact Lie groups (see \cite[Rem.~3.6]{Tie14} and
references therein).
\end{Remark}

The fact that $DU^n=U^nD$ for each $n\in\Z$ does not imply that $D$ is a function of
$U$ (one has to ask more for this, see \cite[Chap.~VI.75]{AG93}). Still, it suggests
to consider the particular case where $D=g(U)$ for some function $g$. This is what we
do in the next corollary.

We write $\chi_S$ for the characteristic function for a set $S$ and we denote by
$C^{k+\gamma}(\S^1;\R)$, $k\in\N$, $\gamma\in[0,1)$, the set of functions in
$C^k(\S^1;\R)$ with $k$-th derivative H\"older continuous of order $\gamma$ (with the
usual identification of $\S^1$ with $[0,1]$, see
\cite[Sec.~3.1.1~\&~Def.~3.2.5]{Gra08}).

\begin{Corollary}\label{first_cor_unitary}
Let $U$ and $A$ be respectively a unitary and a self-adjoint operator in a Hilbert
space $\H$, with $U\in C^1(A)$. Assume that the strong limit
$$
D:=\slim_{N\to\infty}\frac1N\sum_{n=0}^{N-1}U^n\big([A,U]U^{-1}\big)U^{-n}
$$
exists and satisfies $D=g(U)$ for some $g\in C^{2+\gamma}(\S^1;\R)$ with
$\gamma\in(0,1)$, and set $\Theta:=\S^1\setminus g^{-1}(\{0\})$. Then,
\begin{enumerate}
\item[(a)] $\lim_{N\to\infty}\big\langle\varphi,U^N\psi\big\rangle=0$ for each
$\varphi\in\chi_\Theta(U)\hspace{1pt}\H$ and $\psi\in\H$,
\item[(b)] $U|_{\chi_\Theta(U)\H}$ has purely continuous spectrum.
\end{enumerate}
\end{Corollary}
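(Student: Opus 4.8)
The plan is to obtain this corollary directly from Theorem \ref{thm_unitary}, so the only real work is to re-express the subspace $\ker(D)^\perp$ and to check the domain hypothesis $D^{-1}\eta(D)\,\dom(A)\subset\dom(A)$.

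First I would use the spectral theorem for $U$: since $D=g(U)$, one has $\ker(D)=\ker\big(g(U)\big)=\chi_{g^{-1}(\{0\})}(U)\,\H$, and therefore $\ker(D)^\perp=\chi_{\S^1\setminus g^{-1}(\{0\})}(U)\,\H=\chi_\Theta(U)\,\H$; in particular $U|_{\ker(D)^\perp}=U|_{\chi_\Theta(U)\H}$. Thus, once the hypotheses of Theorem \ref{thm_unitary} are verified, the conclusions (a) and (b) of that theorem are exactly the conclusions (a) and (b) of the corollary.

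Next I would reduce the domain condition to a statement about smooth functional calculus of $U$. Fix $\eta\in C^\infty_{\rm c}(\R\setminus\{0\})$ and put $\zeta(\lambda):=\lambda^{-1}\eta(\lambda)$; since $\eta$ is smooth and supported away from $0$, $\zeta\in C^\infty_{\rm c}(\R\setminus\{0\})$, and functional calculus gives $D^{-1}\eta(D)=\zeta(D)=\zeta\big(g(U)\big)=(\zeta\circ g)(U)$. The chain rule, together with the fact that $\zeta$ is $C^\infty$ and $g\in C^{2+\gamma}(\S^1;\R)$ (so $g'\in C^{1+\gamma}$ and $g''\in C^\gamma$), shows that $h:=\zeta\circ g$ again lies in $C^{2+\gamma}(\S^1;\R)$. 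Hence it is enough to prove that $h(U)\,\dom(A)\subset\dom(A)$ for every $h\in C^{2+\gamma}(\S^1;\R)$, and for this it suffices to show $h(U)\in C^1(A)$: indeed, if $S\in C^1(A)$ and $\varphi\in\dom(A)$, then for all $\psi\in\dom(A)$ one has $\langle A\psi,S\varphi\rangle=\langle\psi,SA\varphi\rangle+\langle\psi,B\varphi\rangle$ with $B\in\B(\H)$ the bounded operator determined by $[iS,A]$, so $\psi\mapsto\langle A\psi,S\varphi\rangle$ is $\H$-continuous and $S\varphi\in\dom(A^*)=\dom(A)$.

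The key step — and, I expect, the only delicate one — is to check that $h(U)\in C^1(A)$ for $h\in C^{2+\gamma}(\S^1;\R)$ with $\gamma>0$. I would expand $h$ in Fourier series, $h(U)=\sum_{k\in\Z}\widehat h(k)\,U^k$, the series converging in $\B(\H)$ since $h$ is continuous with summable Fourier coefficients. Each $U^k$ belongs to $C^1(A)$ (as $U,U^{-1}\in C^1(A)$), and from \eqref{eq_com_unitary} together with $[A,U^{-1}]=-U^{-1}[A,U]U^{-1}$ one gets the linear bound $\big\|[A,U^k]\big\|\le|k|\,\big\|[A,U]\big\|$ for all $k\in\Z$. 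Since $h\in C^{2+\gamma}(\S^1)$ with $\gamma>0$ implies $|\widehat h(k)|\le c\,(1+|k|)^{-2-\gamma}$, it follows that
$$
\sum_{k\in\Z}|\widehat h(k)|\Big(\|U^k\|+\big\|[A,U^k]\big\|\Big)
\le\sum_{k\in\Z}|\widehat h(k)|\big(1+|k|\,\|[A,U]\|\big)<\infty ,
$$
so the partial sums of $\sum_k\widehat h(k)\,U^k$ form a Cauchy sequence in the Banach space $C^1(A)$; its limit lies in $C^1(A)$ and coincides with $h(U)$ by uniqueness of limits in $\B(\H)$. This yields $h(U)\in C^1(A)$, hence $D^{-1}\eta(D)\,\dom(A)\subset\dom(A)$, and Theorem \ref{thm_unitary} applies and completes the proof. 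The point to watch is precisely the regularity threshold: the linear growth of $\big\|[A,U^k]\big\|$ forces $\sum_k|k|\,|\widehat h(k)|<\infty$, which is guaranteed by $C^{2+\gamma}$ with $\gamma>0$ but not by any weaker H\"older--Zygmund class; everything else is routine. (Alternatively, one could invoke a general stability result for the classes $C^k(A)$ under $C^\infty$ functional calculus, but the Fourier argument above is self-contained.)
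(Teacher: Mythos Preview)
Your proof is correct and follows essentially the same route as the paper: identify $\chi_\Theta(U)\H=\ker(D)^\perp$, observe that $z\mapsto g(z)^{-1}\eta(g(z))$ lies in $C^{2+\gamma}(\S^1)$, expand in Fourier series, and combine the decay $|\widehat h(k)|\le c(1+|k|)^{-2-\gamma}$ with the linear bound $\|[A,U^k]\|\le|k|\,\|[A,U]\|$ to get $h(U)\in C^1(A)$. The only cosmetic difference is that you package the final step as convergence in the Banach space $C^1(A)$, whereas the paper bounds the quadratic form $\varphi\mapsto\langle A\varphi,h(U)\varphi\rangle-\langle\varphi,h(U)A\varphi\rangle$ directly.
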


\begin{proof}
We have
$$
\chi_\Theta(U)\hspace{1pt}\H
=\chi_{\R\setminus\{0\}}\big(g(U)\big)\hspace{1pt}\H
=\ker\big(g(U)\big)^\perp.
$$
Therefore, the claims will follow from Theorem \ref{thm_unitary} if we show that
${g(U)}^{-1}\eta\big(g(U)\big)\dom(A)\subset\dom(A)$ for each
$\eta\in C^\infty_{\rm c}(\R\setminus\{0\})$. So, take
$\eta\in C^\infty_{\rm c}(\R\setminus\{0\})$. Then, the map
$z\mapsto{g(z)}^{-1}\eta\big(g(z)\big)$ belongs to $C^{2+\gamma}(\S^1)$ by the usual
properties of H\"older continuous functions, and thus the operator
${g(U)}^{-1}\eta\big(g(U)\big)$ can be written as a Fourier series
${g(U)}^{-1}\eta\big(g(U)\big)=\sum_{n\in\Z}c_nU^n$ with coefficients $c_n\in\C$
satisfying $|c_n|\le{\rm Const.}\;\!(1+|n|)^{-(2+\gamma)}$ for all $n\in\Z$ (see
\cite[Cor.~3.2.10(b)]{Gra08}). Therefore, one obtains for each $\varphi\in\dom(A)$ the
inequalities
\begin{align*}
\big|\big\langle A\;\!\varphi,{g(U)}^{-1}\eta\big(g(U)\big)\varphi\big\rangle
-\big\langle\varphi,{g(U)}^{-1}\eta\big(g(U)\big)A\;\!\varphi\big\rangle\big|
&\le\sum_{n\in\Z}|c_n|\cdot\big|\big\langle\varphi,
\big[A,U^n\big]\varphi\big\rangle\big|\\
&\le{\rm Const.}\;\!\big\|[A,U]\big\|\cdot\|\varphi\|^2
\sum_{n\in\Z}(1+|n|)^{-(2+\gamma)}|n|\\
&\le{\rm Const.}\;\!\big\|[A,U]\big\|\cdot\|\varphi\|^2.
\end{align*}
This implies that ${g(U)}^{-1}\eta\big(g(U)\big)\in C^1(A)$, and thus that
${g(U)}^{-1}\eta\big(g(U)\big)\;\!\dom(A)\subset\dom(A)$.
\end{proof}

The next result is a direct consequence of Remark \ref{Rem_unitary}(a) and Corollary \ref{first_cor_unitary}.

\begin{Corollary}\label{second_cor_unitary}
Let $U$ and $A$ be respectively a unitary and a self-adjoint operator in a Hilbert
space $\H$, with $U\in C^1(A)$. Assume that $[A,U]U^{-1}=g(U)$ for some
$g\in C^{2+\gamma}(\S^1;\R)$ with $\gamma\in(0,1)$, and set
$\Theta:=\S^1\setminus g^{-1}(\{0\})$. Then, $U|_{\chi_\Theta(U)\H}$ has purely
absolutely continuous spectrum.
\end{Corollary}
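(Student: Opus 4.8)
The plan is to exploit the fact that the hypothesis $[A,U]U^{-1}=g(U)$ forces the Birkhoff averages in Theorem \ref{thm_unitary} to be \emph{constant} in $N$, so that both the existence of $D$ and the extra summability condition of Remark \ref{Rem_unitary}(a) hold trivially, and then to reuse the regularity verification already carried out in the proof of Corollary \ref{first_cor_unitary}.

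First I would observe that $g(U)$, being a bounded Borel function of $U$, commutes with every power $U^n$; hence $U^n\big([A,U]U^{-1}\big)U^{-n}=U^n g(U)U^{-n}=g(U)$ for all $n\in\Z$, and consequently
$\frac1N\sum_{n=0}^{N-1}U^n\big([A,U]U^{-1}\big)U^{-n}=g(U)$ for every $N\in\N_{\ge1}$. In particular the strong limit $D$ exists and equals $g(U)$, and the difference $\frac1N\sum_{n=0}^{N-1}U^n\big([A,U]U^{-1}\big)U^{-n}-D$ vanishes identically, so that the series appearing in Remark \ref{Rem_unitary}(a) is the zero series and the extra assumption of that remark is satisfied for every $\varphi\in\dom(A)$.

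Next, since $g\in C^{2+\gamma}(\S^1;\R)$ with $\gamma\in(0,1)$, the Fourier-series estimate used in the proof of Corollary \ref{first_cor_unitary} applies verbatim to show that for each $\eta\in C^\infty_{\rm c}(\R\setminus\{0\})$ the operator ${g(U)}^{-1}\eta\big(g(U)\big)$ belongs to $C^1(A)$, hence maps $\dom(A)$ into itself; that is, $D^{-1}\eta(D)\;\!\dom(A)\subset\dom(A)$. Together with $U\in C^1(A)$ and the existence of $D$, this shows that all the hypotheses of Theorem \ref{thm_unitary} are met, so we are precisely in the situation considered in Remark \ref{Rem_unitary}(a). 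Therefore $U|_{\ker(D)^\perp}$ has purely absolutely continuous spectrum, and it only remains to rewrite the subspace as $\ker(D)^\perp=\ker\big(g(U)\big)^\perp=\chi_{\R\setminus\{0\}}\big(g(U)\big)\H=\chi_\Theta(U)\H$, exactly as at the start of the proof of Corollary \ref{first_cor_unitary}.

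I expect no genuine obstacle here: the whole content of the corollary is the reduction just described, and the only nontrivial ingredient — the verification that ${g(U)}^{-1}\eta\big(g(U)\big)\in C^1(A)$ — has already been established in the proof of Corollary \ref{first_cor_unitary} and may simply be invoked.
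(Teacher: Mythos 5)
Your proposal is correct and follows exactly the route the paper intends: the paper derives this corollary as ``a direct consequence of Remark \ref{Rem_unitary}(a) and Corollary \ref{first_cor_unitary}'', and your observations that $D_N=g(U)$ for all $N$ (so that $D$ exists and the summability condition of Remark \ref{Rem_unitary}(a) holds trivially), that the Fourier-series argument of Corollary \ref{first_cor_unitary} gives $D^{-1}\eta(D)\;\!\dom(A)\subset\dom(A)$, and that $\ker(D)^\perp=\chi_\Theta(U)\hspace{1pt}\H$ are precisely the details being left implicit. No gaps.
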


We close the section with some examples of applications of the previous results.

\begin{Example}[Canonical commutation relation]\label{ex_can_unitary}
Assume that $g(U)=g_0\in\R\setminus\{0\}$ in Corollary \ref{second_cor_unitary}. Then,
we have $\chi_\Theta(U)\hspace{1pt}\H=\H$, and thus Corollary \ref{second_cor_unitary} implies
that $U$ has purely absolutely continuous spectrum. In fact, we have in this case the
canonical commutation relation $[A/g_0,U]=U$ which is equivalent to the Weyl
commutation relation
$$
\e^{itA/g_0}U^n\e^{-itA/g_0}=\e^{itn}U^n,\quad t\in\R,~n\in\Z.
$$
Thus, representation theory implies that $U$ has Haar spectrum with uniform
multiplicity (see \cite[Thm.~3]{AP72}). This occurs for instance when $A$ and $U$ are
respectively a number operator and a bilateral shift on a Hilbert space (the bilateral
shift can be for example the Koopman operator of a $K$-automorphism in the
orthocomplement of the constant functions, see \cite[Sec.~3.1]{FRT13} and
\cite[Chap.~3.3]{Par81}).
\end{Example}

\begin{Example}[Skew products of compact Lie groups]\label{ex_skew}
Let $X$ be a compact Lie group with normalised Haar measure $\mu_X$ and let
$F\equiv\{F_t\}_{t\in\R}$ be a $C^\infty$ measure-preserving flow on $(X,\mu_X)$. The
family of operators $V_t:\ltwo(X,\mu_X)\to\ltwo(X,\mu_X)$ given by
$V_t\;\!\varphi:=\varphi\circ F_t$ defines a strongly continuous one-parameter unitary
group with self-adjoint generator $H$ in $\ltwo(X,\mu_X)$ which is essentially
self-adjoint on $C^\infty(X)$ and given by
$$
H\varphi:=-i\;\!\L_Y\varphi,\quad\varphi\in C^\infty(X),
$$
with $Y$ the divergence-free vector field associated to $F$ and $\L_Y$ the
corresponding Lie derivative.

Let $G$ be a second compact Lie group with normalised Haar measure $\mu_G$ and neutral
element $e_G$. Then, each function $\phi\in C(X;G)$ induces a cocycle
$X\times\Z\ni(x,n)\mapsto\phi^{(n)}(x)\in G$ over $F_1$ given by
$$
\phi^{(n)}(x):=
\begin{cases}
\phi(x)(\phi\circ F_1)(x)\cdots(\phi\circ F_{n-1})(x) & \hbox{if }n\ge1\\
e_G & \hbox{if }n=0\\
\big(\phi^{(-n)}\circ F_n\big)(x)^{-1}
& \hbox{if }n\le-1.
\end{cases}
$$
The skew product $T_\phi$ associated to $\phi$ defined by
$$
T_\phi:(X\times G,\mu_X\otimes\mu_G)\to(X\times G,\mu_X\otimes\mu_G),\quad
(x,g)\mapsto\big(F_1(x),g\;\!\phi(x)\big),
$$
is an automorphism of $(X\times G,\mu_X\otimes\mu_G)$, and the Koopman operator
$$
U_\phi\;\!\psi:=\psi\circ T_\phi,\quad\psi\in\H:=\ltwo(X\times G,\mu_X\otimes\mu_G),
$$
is a unitary operator in $\H$.

Let $\widehat G$ be the set of all (equivalence classes of) finite-dimensional
irreducible unitary representations of $G$. Then, each $\pi\in\widehat G$ is a
$C^\infty$ group homomorphism from $G$ to the unitary group $\U(d_\pi)$ of degree
$d_\pi:=\dim(\pi)<\infty$, and the Peter-Weyl theorem implies that the set of all
matrix elements $\{\pi_{jk}\}_{j,k=1}^{d_\pi}$ of all representations
$\pi\in\widehat G$ forms an orthogonal basis of $\ltwo(G,\mu_G)$. Accordingly, one has
the orthogonal decomposition
\begin{equation}\label{eq_decompo}
\H=\bigoplus_{\pi\in\widehat G}\bigoplus_{j=1}^{d_\pi}\H^{(\pi)}_j
\quad\hbox{with}\quad
\H^{(\pi)}_j:=\left\{\sum_{k=1}^{d_\pi}\varphi_k\otimes\pi_{jk}\mid
\varphi_k\in\ltwo(X,\mu_X),~k=1,\ldots,d_\pi\right\},
\end{equation}
and a calculation shows that $U_\phi$ is reduced by the decomposition
\eqref{eq_decompo}, with restriction $U_{\pi,j}:=U_\phi|_{\H^{(\pi)}_j}$ given by
$$
U_{\pi,j}\sum_{k=1}^{d_\pi}\varphi_k\otimes\pi_{jk}
=\sum_{k,\ell=1}^{d_\pi}(V_1\varphi_k)(\pi_{\ell k}\circ\phi)\otimes\pi_{j\ell}\;\!,
\quad\varphi_k\in\ltwo(X,\mu_X).
$$
Furthermore, the operator $A$ given by
$$
A\;\!\sum_{k=1}^{d_\pi}\varphi_k\otimes\pi_{jk}
:=\sum_{k=1}^{d_\pi}H\varphi_k\otimes\pi_{jk},
\quad\varphi_k\in C^\infty(X),
$$
is essentially self-adjoint in $\H^{(\pi)}_j$, and calculations as in
\cite[Lemma~3.2(c)~\&~Thm.~3.5]{Tie14} show the following\;\!: If for each
$k,\ell\in\{1,\ldots,d_\pi\}$ the function $\pi_{k\ell}\circ\phi\in C(X)$ satisfies
$\L_Y(\pi_{k\ell}\circ\phi)\in\linf(X)$, then $U_{\pi,j}\in C^1(A)$ with
$M:=[A,U_{\pi,j}](U_{\pi,j})^{-1}$ the matrix-valued multiplication operator in
$\H^{(\pi)}_j$ with matrix elements
$$
M_{k\ell}:=-i\;\!\big\{\L_Y(\pi\circ\phi)\cdot(\pi^*\circ\phi)\big\}_{k\ell}\;\!.
$$
Furthermore, one has for each $N\in\N_{\ge1}$
$$
D_N:=\frac1N\sum_{n=0}^{N-1}
(U_{\pi,j})^n\big([A,U_{\pi,j}](U_{\pi,j})^{-1}\big)(U_{\pi,j})^{-n}
=\frac1N\sum_{n=0}^{N-1}
\big(\pi\circ\phi^{(n)}\big)(M\circ F_n)\big(\pi^*\circ\phi^{(n)}\big),
$$
and $D_N$ reduces to the Birkhoff sum $\frac1N\sum_{n=0}^{N-1}M\circ F_n$ if the
matrix-valued function $\pi\circ\phi$ is diagonal.

We present now three cases where the limit $\slim_{N\to\infty}D_N$ exists and Theorem
\ref{thm_unitary} can be applied, but we stress that many other cases can be treated
since the method is general.

\noindent\\
{\bf Case 1.} Suppose that $X\times G=\T^d\times\T^{d'}$ with $d,d'\ge1$ and let
$F\equiv\{F_t\}_{t\in\R}$ be the flow on $\T^d$ given by
$$
F_t(x):=x+ty~\hbox{(mod $\Z^d$)},\quad t\in\R,~x\in\T^d,~y\in\R^d,
$$
with $y_1,y_2,\ldots,y_d,1$ rationally independent. Then, each element
$\chi_q\in\widehat{\T^{d'}}$ is a character of $\T^{d'}$ given by
$\chi_q(z):=\e^{2\pi iq\cdot z}$ for some $q\in\Z^{d'}$, one has
$\H^{(\pi)}_j\equiv\H^{(\chi_q)}_1=\ltwo(\T^d,\mu_{\T^d})\otimes\{\chi_q\}$, $F_1$ is
uniquely ergodic, and $\L_Y=y\cdot\nabla_x$. Suppose also that
$\phi\in C(\T^d;\T^{d'})$ satisfies $\phi=\xi+\eta$, where
\begin{enumerate}
\item[(i)] $\xi:\T^d\to\T^{d'}$ is given by $\xi(x):=Bx$ (mod $\Z^{d'}$) for some
integer matrix $B\in\M_{d'\times d}(\Z)$,
\item[(ii)] $\eta\in C(\T^d;\T^{d'})$ is such that $\L_Y(q\cdot\eta)\in\linf(\T^d)$
for each $q\in\Z^{d'}$.
\end{enumerate}
Then, a direct calculation (see \cite[Sec.~4.1]{Tie14}) shows that the matrix-valued
function $D_N$ reduces to the scalar function
$$
D_N=2\pi y\cdot\big(B^{\sf T}q\big)
+\frac{2\pi}N\sum_{n=0}^{N-1}\L_Y(q\cdot\eta)\circ F_n.
$$
Therefore, Birkhoff's pointwise ergodic theorem and Lebesgue dominated convergence
theorem imply that
$$
D:=\slim_{N\to\infty}D_N
=2\pi y\cdot\big(B^{\sf T}q\big)+2\pi\int_{\T^d}\d\mu_{\T^d}\,\L_Y(q\cdot\eta)
=2\pi y\cdot\big(B^{\sf T}q\big).
$$
Since $y_1,y_2,\ldots,y_d,1$ are rationally independent, one has $\ker(D)=\{0\}$ if
$B^{\sf T}q\ne0$. It follows from Theorem \ref{thm_unitary}(a) that
$\lim_{N\to\infty}\big\langle\varphi,(U_{\chi_q,1})^N\psi\big\rangle=0$ for each
$\varphi,\psi\in\H^{(\chi_q)}_1$ if $B^{\sf T}q\ne0$, and thus $U_\phi$ is strongly
mixing in the subspace
$\bigoplus_{q\in\Z^{d'}\!,\,B^{\sf T}q\ne0}\H^{(\chi_q)}_1\subset\H$.

In the case $d=d'=1$, this implies that $U_\phi$ is strongly mixing in
$\bigoplus_{q\in\Z\setminus\{0\}}\H^{(\chi_q)}_1$ if $\phi(x)=Bx+\eta(x)$, with
$B\in\Z\setminus\{0\}$ and $\eta:\T\to\T$ absolutely continuous with
$\eta'\in\linf(\T)$. A. Iwanik, M. Lema\'nzyk and D. Rudolph proved in
\cite[Sec.~3]{ILR93} the same result without the assumption $\eta'\in\linf(\T)$ by
studying directly the Fourier coefficients of the operators $U_{\chi_q,1}$. In the
case $d>1$ or $d'>1$, the result seems to be new, but we refer to the paper of K.
Fr{\c{a}}czek \cite{Fra00} for closely related results in the case of extensions of
rotations on $\T^d$ by cocycles $\Z^d\times\T^d\to\T$.

\noindent\\
{\bf Case 2.} Suppose that $X\times G=\T^d\times\SU(2)$ with $d\ge1$ and let
$F\equiv\{F_t\}_{t\in\R}$ be the flow on $\T^d$ given by
$$
F_t(x):=x+ty~\hbox{(mod $\Z^d$)},\quad t\in\R,~x\in\T^d,~y\in\R^d,
$$
with $y_1,y_2,\ldots,y_d,1$ rationally independent. Then, each $(n+1)$-dimensional
representation $\pi^{(n)}\in\widehat{\SU(2)}$ can be described in terms of homogeneous
polynomials of degree $n$ in two variables (see \cite[Chap.~II]{Sug90}), $F_1$ is
uniquely ergodic, and $\L_Y=y\cdot\nabla_x$. Suppose also that
$\phi\in C\big(\T^d;\SU(2)\big)$ satisfies
$$
\phi(x):=h
\begin{pmatrix}
\e^{2\pi i(b\cdot x+\eta(x))} & 0\\
0 & \e^{-2\pi i(b\cdot x+\eta(x))}
\end{pmatrix}
h^*,
\quad x\in\T^d,
$$
with $h\in\SU(2)$, $b\in\Z^d\setminus\{0\}$ and $\eta\in C(\T^d;\R)$ such that 
$\L_Y(\eta)\in\linf(\T^d)$. Then, a calculation using the expression of the matrix
elements $\pi^{(n)}_{jk}$ (see \cite[Sec.~4.2]{Tie14}) shows that
$$
D_N=2\pi\left((y\cdot b)+\frac1N\sum_{m=0}^{N-1}\L_Y(\eta)\circ F_m\right)
\begin{pmatrix}
0!(n-0)!(2\cdot0-n) && 0\\
& \ddots &\\
0 && n!(n-n)!(2\cdot n-n)
\end{pmatrix}.
$$
Therefore, Birkhoff's pointwise ergodic theorem and Lebesgue dominated convergence
theorem imply that
\begin{align*}
D:=\slim_{N\to\infty}D_N
&=2\pi\left((y\cdot b)+\int_{\T^d}\d\mu_{\T^d}\,\L_Y(\eta)\right)
\begin{pmatrix}
0!(n-0)!(2\cdot0-n) && 0\\
& \ddots &\\
0 && n!(n-n)!(2\cdot n-n)
\end{pmatrix}\\
&=2\pi(y\cdot b)
\begin{pmatrix}
0!(n-0)!(2\cdot0-n) && 0\\
& \ddots &\\
0 && n!(n-n)!(2\cdot n-n)
\end{pmatrix}.
\end{align*}
Since $y_1,y_2,\ldots,y_d,1$ are rationally independent, one has $(y\cdot b)\ne0$, and
thus
$$
\ker(D)
=\ker(E)
\quad\hbox{with}\quad
E:=\begin{pmatrix}
(2\cdot0-n) && 0\\
& \ddots &\\
0 && (2\cdot n-n)
\end{pmatrix}.
$$
Since $E$ is invertible when $n\in2\N+1$, it follows from Theorem \ref{thm_unitary}(a)
that $\lim_{N\to\infty}\big\langle\varphi,(U_{\pi^{(n)},j})^N\psi\big\rangle=0$ for
each $\varphi,\psi\in\H^{(\pi^{(n)})}_j$ when $n\in2\N+1$. Thus, $U_\phi$ is strongly
mixing in the subspace
$\bigoplus_{n\in2\N+1}\bigoplus_{j=0}^n\H^{(\pi^{(n)})}_j\subset\H$.

This result on strong mixing complements previous works of K. Fr{\c{a}}czek and of the
author, where it is proved that $U_\phi$ has countable spectrum in the subspace $\bigoplus_{n\in2\N+1}\bigoplus_{j=0}^n\H^{(\pi^{(n)})}_j$ if the function
$\phi:\T^d\to\SU(2)$ satisfies additional regularity assumptions (see
\cite[Thm.~6.1~\&~Thm.~8.2]{Fra00_2} when $d=1,2$ and \cite[Thm.~4.5]{Tie14} when
$d\ge1$ is arbitrary).

\noindent\\
{\bf Case 3.} Suppose that $X\times G=\T^d\times\U(2)$ with $d\ge1$ and let
$F\equiv\{F_t\}_{t\in\R}$ be the flow on $\T^d$ given by
$$
F_t(x):=x+ty~\hbox{(mod $\Z^d$)},\quad t\in\R,~x\in\T^d,~y\in\R^d,
$$
with $y_1,y_2,\ldots,y_d,1$ rationally independent. Then, the set $\widehat{\U(2)}$
coincides (up to unitary equivalence) with the set of tensors products
$\{\rho_{2m-n}\otimes\pi^{(n)}\}_{m\in\Z,n\in\N}$, with $\pi^{(n)}\in\widehat{\SU(2)}$
and $\rho_{2m-n}:\S^1\to\S^1$ given by $\rho_{2m-n}(z):=z^{2m-n}$ (see
\cite[Sec.~II.5]{BtD85}), $F_1$ is uniquely ergodic, and $\L_Y=y\cdot\nabla_x$.
Suppose also that
$$
\phi(x)=h
\begin{pmatrix}
\e^{2\pi i(b_1\cdot x+\eta_1(x))} & 0\\
0 & \e^{2\pi i(b_2\cdot x+\eta_2(x))}
\end{pmatrix}
h^*,\quad x\in\T^d,
$$
with $h\in\U(2)$, $b_1,b_2\in\Z^d$ and $\eta_1,\eta_2\in C(\T^d;\R)$ such that
$\L_Y(\eta_1),\L_Y(\eta_2)\in\linf(\T^d)$. Then, using results of
\cite[Sec.~4.3]{Tie14} one can prove the existence of the limit
$D:=\slim_{N\to\infty}D_N$ and apply Theorem \ref{thm_unitary}(a) for some indices
$(m,n)\in\Z\times\N$ and some values of $b_1,b_2\in\Z^d$. We leave this time the
details to the reader and just give the final result\hspace{1pt}: Write
$b_\pm:=b_1\pm b_2$ and set
$$
R:=\left\{(m,n)\in\Z\times\N\mid
\inf_{k\in\{0,\ldots,n\}}\big|(2m-n)(b_+\cdot y)+(2k-n)(b_-\cdot y)\big|>0\right\}.
$$
Then, $U_\phi$ is strongly mixing in the subspace
$\bigoplus_{(m,n)\in R}\bigoplus_{j=0}^n\H^{(\rho_{2m-n}\otimes\pi^{(n)})}_j\subset\H$.

This result on strong mixing complements \cite[Thm.~4.9]{Tie14}, where it is proved
that $U_\phi$ has countable Lebesgue spectrum in the subspace
$\bigoplus_{(m,n)\in R}\bigoplus_{j=0}^n\H^{(\rho_{2m-n}\otimes\pi^{(n)})}_j$ if
$\L_Y(\eta_1)$ and $\L_Y(\eta_2)$ satisfy an additional regularity assumption of
Dini-type.
\end{Example}

%--------------------------------------------------------------------------------------
\section{Strong mixing for continuous flows}\label{Sec_self}
\setcounter{equation}{0}
%--------------------------------------------------------------------------------------

We present in this section new criteria for the strong mixing property of a continuous
flow $\{\e^{-itH}\}_{t\in\R}$ induced by a self-adjoint operator $H$ in a Hilbert
space $\H$. We start with the main theorem of the section, then present some of its
corollaries, and finally give some examples of applications.

\begin{Theorem}[Strong mixing for continuous flows]\label{thm_self}
Let $H$ and $A$ be self-adjoint operators in a Hilbert space $\H$, with
$(H-i)^{-1}\in C^1(A)$. Assume that the strong limit
$$
D:=\slim_{t\to\infty}\frac1t\int_0^t\d s\,\e^{isH}(H+i)^{-1}[iH,A](H-i)^{-1}\e^{-isH}
$$
exists, and suppose that $D^{-1}\eta(D)\;\!\dom(A)\subset\dom(A)$ for each
$\eta\in C^\infty_{\rm c}(\R\setminus\{0\})$. Then,
\begin{enumerate}
\item[(a)] $\lim_{t\to\infty}\big\langle\varphi,\e^{-itH}\psi\big\rangle=0$ for each
$\varphi\in\H$ and $\psi\in\ker(D)^\perp$,
\item[(b)] $H|_{\ker(D)^\perp}$ has purely continuous spectrum.
\end{enumerate}
\end{Theorem}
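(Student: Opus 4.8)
The plan is to transpose the proof of Theorem \ref{thm_unitary} to the continuous setting, the telescoping identity $[A,U^N]=ND_NU^N$ being replaced by a Duhamel formula. As preliminaries, I would first record that, since $(H-i)^{-1}\in C^1(A)$, one also has $(H+i)^{-1}=\big((H-i)^{-1}\big)^*\in C^1(A)$, that $H$ is of class $C^1(A)$, and that $\e^{-itH}\in C^1(A)$ for all $t\in\R$; hence $(H\pm i)^{-1}$ and $\e^{-itH}$ leave $\dom(A)$ invariant, and $\dom(H)\cap\dom(A)$ is a core for $H$. The relations of Section \ref{Sec_commutators} show that $B:=(H+i)^{-1}[iH,A](H-i)^{-1}$ is bounded and self-adjoint, so $D_t:=\frac1t\int_0^t\d s\,\e^{isH}B\e^{-isH}$ is a well-defined self-adjoint operator with $\|D_t\|\le\|B\|$ and $\slim_{t\to\infty}D_t=D$; thus $D$ is bounded and self-adjoint, and $\e^{iuH}D\e^{-iuH}=D$ for all $u\in\R$ (the boundary terms of the Cesàro average being $O(1/t)$), so $D$ commutes with $\e^{-itH}$ and $(H\pm i)^{-1}$. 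Consequently $\ker(D)^\perp$ reduces $\e^{-itH}$, the operator $H|_{\ker(D)^\perp}$ is a well-defined self-adjoint operator, and $D^{-1}\eta(D)\in\B(\H)$ by functional calculus.

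The next and central step is the identity: for all $\varphi,\psi\in\dom(H)\cap\dom(A)$ and $t>0$,
$$
\big\langle A\varphi,\e^{-itH}\psi\big\rangle-\big\langle\varphi,\e^{-itH}A\psi\big\rangle
=t\,\big\langle(H-i)\varphi,\,\e^{-itH}D_t\,(H-i)\psi\big\rangle.
$$
To prove it, I would use the Duhamel relation $[A,\e^{-itH}]=\int_0^t\d s\,\e^{-i(t-s)H}[iH,A]\e^{-isH}$ (valid as forms on $\dom(H)$, obtained by integrating $\tfrac{\d}{\d s}\big(\e^{-i(t-s)H}A\e^{-isH}\big)=\e^{-i(t-s)H}[iH,A]\e^{-isH}$), so that the left-hand side equals $\langle\varphi,[A,\e^{-itH}]\psi\rangle=\int_0^t\d s\,\langle\e^{i(t-s)H}\varphi,[iH,A]\e^{-isH}\psi\rangle$; then, writing $\e^{i(t-s)H}\varphi=(H-i)^{-1}\e^{i(t-s)H}(H-i)\varphi$ and $\e^{-isH}\psi=(H-i)^{-1}\e^{-isH}(H-i)\psi$ (legitimate since $\varphi,\psi\in\dom(H)$ and $(H-i)^{-1}$ commutes with the flow) and using $(H+i)^{-1}[iH,A](H-i)^{-1}=B$ together with $\big((H-i)^{-1}\big)^*=(H+i)^{-1}$, the integrand becomes $\langle(H-i)\varphi,\e^{-i(t-s)H}B\e^{-isH}(H-i)\psi\rangle$, whose $s$-integral is exactly $t\,\langle(H-i)\varphi,\e^{-itH}D_t(H-i)\psi\rangle$.

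With the identity in hand the argument runs as in Theorem \ref{thm_unitary}. Given $\eta\in C^\infty_{\rm c}(\R\setminus\{0\})$ and $\chi\in\dom(A)$, set $\psi:=(H-i)^{-1}D^{-1}\eta(D)\chi$; by the hypothesis $D^{-1}\eta(D)\dom(A)\subset\dom(A)$ and the invariances above, $\psi\in\dom(H)\cap\dom(A)$ and $(H-i)\psi=D^{-1}\eta(D)\chi$, so that $D_t(H-i)\psi=\eta(D)\chi+(D_t-D)D^{-1}\eta(D)\chi$. Inserting this into the identity, dividing by $t$, and bounding its left-hand side by $\|A\varphi\|\,\|\psi\|+\|\varphi\|\,\|A\psi\|$ (a constant independent of $t$), one obtains
$$
\big\langle(H-i)\varphi,\e^{-itH}\eta(D)\chi\big\rangle
=O(1/t)-\big\langle(H-i)\varphi,\e^{-itH}(D_t-D)D^{-1}\eta(D)\chi\big\rangle,
$$
which tends to $0$ as $t\to\infty$ (the last term because $D^{-1}\eta(D)\chi$ is a fixed vector and $\slim_{t\to\infty}D_t=D$). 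Since $\{(H-i)\varphi:\varphi\in\dom(H)\cap\dom(A)\}$ is dense in $\H$ (as $\dom(H)\cap\dom(A)$ is a core for $H$), $\{\eta(D)\chi:\eta\in C^\infty_{\rm c}(\R\setminus\{0\}),\,\chi\in\dom(A)\}$ is dense in $\ker(D)^\perp$, and $\|\e^{-itH}\|=1$, claim (a) follows; and (b) follows from (a), since an eigenvector $\varphi\in\ker(D)^\perp\setminus\{0\}$ of $H|_{\ker(D)^\perp}$ would satisfy $|\langle\varphi,\e^{-itH}\varphi\rangle|=\|\varphi\|^2$ for all $t$.

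The main obstacle is the rigorous proof of the Duhamel identity: one must treat $[iH,A]$ as a continuous form on $\dom(H)$, differentiate $s\mapsto\langle\e^{i(t-s)H}\varphi,A\e^{-isH}\psi\rangle$ in the form sense and identify its derivative with $[iH,A]$ evaluated along the flow, and interchange the time integral with the inner product — all of which rest on the $C^1(A)$ calculus of \cite[Chap.~6]{ABG96} and on $\dom(H)\cap\dom(A)$ being a core for $H$. Everything else is then a line-by-line continuous analogue of the estimate \eqref{eq_bound_unitary}.
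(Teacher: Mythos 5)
Your overall strategy is the paper's own: your displayed identity is exactly the paper's relation $[\widetilde A,\e^{-itH}]=t\e^{-itH}D_t$ for the sandwiched operator $\widetilde A:=(H+i)^{-1}A(H-i)^{-1}$, tested on vectors of the form $(H-i)^{-1}\chi$ with $\chi\in\dom(A)$; the subsequent $O(1/t)$ estimate against $D^{-1}\eta(D)\chi$, the two density arguments, and the deduction of (b) from (a) are all the same. The genuine gap is in your preliminaries, and it propagates into the step you yourself flag as the main obstacle: from $(H-i)^{-1}\in C^1(A)$ one may \emph{not} conclude that $\e^{-itH}\in C^1(A)$, nor that $\e^{-itH}$ leaves $\dom(A)$ invariant. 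The hypothesis $H\in C^1(A)$ only provides $[H,A]$ as a continuous form from $\dom(H)$ to $\dom(H)^*$; invariance of $\dom(A)$ under the unitary group of $H$ requires strictly stronger assumptions (roughly, that $[H,A]$ maps $\dom(H)$ into $\H$; see \cite[Sec.~6.3]{ABG96}). Your derivation of the Duhamel formula for $[A,\e^{-itH}]$ differentiates $s\mapsto\big\langle\e^{i(t-s)H}\varphi,A\e^{-isH}\psi\big\rangle$, which presupposes $\e^{-isH}\psi\in\dom(A)$ for all intermediate $s$; under the stated hypotheses this quantity need not even be defined, so the Duhamel identity cannot be established along the route you sketch.

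The repair is precisely the device your identity already encodes but your derivation bypasses: keep the resolvents attached to $A$ throughout. The paper takes $\widetilde A=(H+i)^{-1}A(H-i)^{-1}$, essentially self-adjoint on $\dom(A)$ by \cite[Lemma~7.2.15]{ABG96}, and regularises $A$ by the bounded operators $A_\varepsilon=(i\varepsilon)^{-1}(\e^{i\varepsilon A}-1)$. For fixed $\varepsilon$ the scalar map $s\mapsto\big\langle\varphi,\e^{i(s-t)H}(H+i)^{-1}A_\varepsilon(H-i)^{-1}\e^{-isH}\varphi\big\rangle$ is genuinely differentiable (every factor is bounded), the fundamental theorem of calculus applies, and one passes to the limit $\varepsilon\searrow0$ using $\slim_{\varepsilon\searrow0}[iS,A_\varepsilon]=[iS,A]$ for $S\in C^1(A)$ together with dominated convergence. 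This yields $\e^{-itH}\in C^1(\widetilde A)$ with $[\widetilde A,\e^{-itH}]=t\e^{-itH}D_t$ as an identity of bounded operators, which is your central identity restricted to $\varphi,\psi\in(H-i)^{-1}\dom(A)$ (this smaller set suffices, since $(H-i)\big((H-i)^{-1}\dom(A)\big)=\dom(A)$ is dense in $\H$, whereas extending the identity to all of $\dom(H)\cap\dom(A)$ is not immediate and is not needed). With $\widetilde A$ in place of $A$ in the final bound — legitimate because $\dom(A)\subset\dom(\widetilde A)$, so the hypothesis $D^{-1}\eta(D)\;\!\dom(A)\subset\dom(A)$ still applies — the rest of your argument is correct and coincides with the paper's.
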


Before the proof, we note that $(H+i)^{-1}[iH,A](H-i)^{-1}$ is bounded and
self-adjoint. Thus, all the operators
$\frac1t\int_0^t\d s\,\e^{isH}(H+i)^{-1}[iH,A](H-i)^{-1}\e^{-isH}$ are bounded and
self-adjoint, and so is their strong limit $D$. It follows that the operators
$D^{-1}\eta(D)$ with $\eta\in C^\infty_{\rm c}(\R\setminus\{0\})$ are well-defined by
functional calculus. We also note that $D\e^{itH}=\e^{itH}D$ for each $t\in\R$. So,
$D\;\!\chi_B(H)=\chi_B(H)D$ for each Borel set $B\subset\R$. Thus, $\ker(D)^\perp$ is
a reducing subspace for $H$, and $H|_{\ker(D)^\perp}$ is a well-defined self-adjoint
operator (see \cite[Thm.~7.28]{Wei80}). Finally, we note that the result of point (b)
share some similarities with the criterion for the absence of eigenvalues provided by
the Virial theorem for self-adjoint operators (see \cite[Cor.~7.2.11]{ABG96}).

\begin{proof}[Proof of Theorem \ref{thm_self}]
(a) Since $(H-i)^{-1}\in C^1(A)$, we have $(H-i)^{-1}\;\!\dom(A)\subset\dom(A)$,
and the operator
$$
\widetilde A\;\!\varphi:=(H+i)^{-1}A\;\!(H-i)^{-1}\varphi,\quad\varphi\in\dom(A),
$$
is essentially self-adjoint (see \cite[Lemma~7.2.15]{ABG96}). Take
$\varphi\in\dom(A)$ and $t>0$, and define for $\varepsilon\in\R\setminus\{0\}$ the
operator $A_\varepsilon:=(i\varepsilon)^{-1}(\e^{i\varepsilon A}-1)$. Then, we have
the equalities
\begin{align}
&\big\langle\widetilde A\;\!\varphi,\e^{-itH}\varphi\big\rangle
-\big\langle\varphi,\e^{-itH}\widetilde A\;\!\varphi\big\rangle\nonumber\\
&=\lim_{\varepsilon\searrow0}\Big(\big\langle\varphi,
(H+i)^{-1}A_\varepsilon\;\!(H-i)^{-1}\e^{-itH}\varphi\big\rangle
-\big\langle\varphi,
\e^{-itH}(H+i)^{-1}A_\varepsilon\;\!(H-i)^{-1}\varphi\big\rangle\Big)\nonumber\\
&=\lim_{\varepsilon\searrow0}\int_0^t\d s\,\frac\d{\d s}\;\!\big\langle\varphi,
\e^{i(s-t)H}(H+i)^{-1}A_\varepsilon\;\!(H-i)^{-1}\e^{-isH}\varphi\big\rangle\nonumber\\
&=\lim_{\varepsilon\searrow0}\int_0^t\d s\,\big\langle\varphi,
\e^{i(s-t)H}(H+i)^{-1}[iH,A_\varepsilon](H-i)^{-1}\e^{-isH}\varphi\big\rangle.
\label{eq_lim_int}
\end{align}
Now, since $(H-i)^{-1}\in C^1(H)$, we have
$$
\slim_{\varepsilon\searrow0}(H+i)^{-1}[iH,A_\varepsilon](H-i)^{-1}
=(H+i)^{-1}[iH,A](H-i)^{-1}.
$$
Therefore, we can interchange the limit and the integral in \eqref{eq_lim_int} and
obtain
\begin{align*}
\big\langle\widetilde A\;\!\varphi,\e^{-itH}\varphi\big\rangle
-\big\langle\varphi,\e^{-itH}\widetilde A\;\!\varphi\big\rangle
&=\int_0^t\d s\,\big\langle\varphi,
\e^{i(s-t)H}(H+i)^{-1}[iH,A](H-i)^{-1}\e^{-isH}\varphi\big\rangle\\
&=\big\langle\varphi,t\e^{-itH}D_t\;\!\varphi\big\rangle
\end{align*}
with $D_t:=\frac1t\int_0^t\d s\,\e^{isH}(H+i)^{-1}[iH,A](H-i)^{-1}\e^{-isH}$. Since
$\dom(A)$ is a core for $\widetilde A$, this implies that
$\e^{-itH}\in C^1(\widetilde A)$ with
\begin{equation}\label{eq_com_self}
\big[\widetilde A,\e^{-itH}\big]=t\e^{-itH}D_t.
\end{equation}
Now, let
$$
\D:=\big\{\psi\in\dom(A)\mid\psi=\eta(D)\psi
~\hbox{for some}~\eta\in C^\infty_{\rm c}(\R\setminus\{0\})\big\},
$$
and take $\varphi\in\dom(A)$ and $\psi\equiv\eta(D)\psi\in\D$. Then, the assumption
$D^{-1}\eta(D)\;\!\dom(A)\subset\dom(A)$ and \eqref{eq_com_self} imply for each $t>0$
that
\begin{align}
&\big|\big\langle\varphi,\e^{-itH}\psi\big\rangle\big|\nonumber\\
&=\big|\big\langle\varphi,\e^{-itH}DD^{-1}\eta(D)\psi\big\rangle\big|\nonumber\\
&\le\big|\big\langle\varphi,\e^{-itH}(D_t-D)D^{-1}\eta(D)\psi\big\rangle\big|
+\big|\big\langle\varphi,\e^{-itH}D_t\;\!D^{-1}\eta(D)\psi\big\rangle\big|\nonumber\\
&\le\|\varphi\|\cdot\big\|(D_t-D)D^{-1}\eta(D)\psi\big\|
+\frac1t\;\!\big|\big\langle\varphi,
\big[\widetilde A,\e^{-itH}\big]D^{-1}\eta(D)\psi\big\rangle\big|\nonumber\\
&=\|\varphi\|\cdot\big\|(D_t-D)D^{-1}\eta(D)\psi\big\|
+\frac1t\;\!\big|\big\langle\widetilde A\;\!\varphi,\e^{-itH}D^{-1}\eta(D)\psi\big\rangle
-\big\langle\varphi,\e^{-itH}\widetilde A\;\!D^{-1}\eta(D)\psi\big\rangle\big|\nonumber\\
&\le\|\varphi\|\cdot\big\|(D_t-D)D^{-1}\eta(D)\psi\big\|
+\frac1t\Big(\big\|\widetilde A\;\!\varphi\big\|\cdot\big\|D^{-1}\eta(D)\psi\big\|
+\|\varphi\|\cdot\big\|\widetilde A\;\!D^{-1}\eta(D)\psi\big\|\Big).
\label{eq_bound_self}
\end{align}
Since $\slim_{t\to\infty}D_t=D$, one infers that
$\lim_{t\to\infty}\big\langle\varphi,\e^{-itH}\psi\big\rangle=0$ for each
$\varphi\in\dom(A)$ and $\psi\in\D$, and thus the claim follows by the density of
$\dom(A)$ in $\H$ and the density of $\D$ in $\ker(D)^\perp$.

(b) Suppose by absurd there exist $\lambda\in\R$ and
$\psi\in\ker(D)^\perp\setminus\{0\}$ such that $H\psi=\lambda\psi$. Then, point (a)
implies that
$$
0
=\lim_{t\to\infty}\big|\big\langle\psi,\e^{-itH}\psi\big\rangle\big|
=\lim_{t\to\infty}\big|\e^{-it\lambda}\big|\cdot\|\psi\|^2
=\|\psi\|^2
\ne0,
$$
which is a contradiction.
\end{proof}

\begin{Remark}\label{Rem_self}
(a) If we make in Theorem \ref{thm_self} the extra assumption that
$$
\left\|\left(\frac1t\int_0^t\d s\,\e^{isH}(H+i)^{-1}[iH,A](H-i)^{-1}\e^{-isH}
-D\right)\psi\right\|\in\ltwo\big([1,\infty),\d t\big)
~~\hbox{for each $\psi\in\dom(A)$,}
$$
then we infer from \eqref{eq_bound_self} that
$\big|\big\langle\psi,\e^{-itH}\psi\big\rangle\big|\in\ltwo\big([1,\infty),\d t\big)$
for each $\psi\in\D$. So, each $\psi\in\D$ belongs to the absolutely continuous
subspace $\H_{\rm ac}(H)$ of $H$ (see the proof of \cite[Thm.~XIII.23]{RS79}), and
thus $H|_{\ker(D)^\perp}$ has purely absolutely continuous spectrum since $\D$ is
dense in $\ker(D)^\perp$ and $\H_{\rm ac}(H)$ is closed in $\H$.

(b) A Cayley transform shows that considering Birkhoff averages of the operator
$(H+i)^{-1}[iH,A](H-i)^{-1}$ in the self-adjoint case is consistent with considering
Birkhoff averages of the operator $[A,U]U^{-1}$ in the unitary case. Indeed, if $U$
and $A$ are as in Theorem \ref{thm_unitary} and $1$ does not belong to the point
spectrum of $U$, then the range $\Ran(1-U)$ of $1-U$ is dense in $\H$, and the Cayley
transform of $U$ given by
$$
H\varphi:=i\;\!(1+U)(1-U)^{-1}\varphi,
\quad\varphi\in\dom(H):=\Ran(1-U),
$$
is a self-adjoint operator \cite[Thm.~8.4(b)]{Wei80}. Moreover, one has the identities
$$
U^{-1}=(H+i)(H-i)^{-1}
\quad\hbox{and}\quad
(H+i)^{-1}=\frac1{2i}\;\!(1-U),
$$
and thus $H$ is of class $C^1(A)$ with
\begin{align*}
(H+i)^{-1}[iH,A](H-i)^{-1}
=i\big[A,(H+i)^{-1}\big](H+i)(H-i)^{-1}
=-\frac12\;\![A,U]U^{-1}.
\end{align*}
\end{Remark}

In analogy to the unitary case, we consider the particular case where $D=g(H)$ for
some function $g$.

\begin{Corollary}\label{first_cor_self}
Let $H$ and $A$ be self-adjoint operators in a Hilbert space $\H$, with
$(H-i)^{-1}\in C^1(A)$. Assume that the strong limit
$$
D:=\slim_{t\to\infty}\frac1t\int_0^t\d s\,\e^{isH}(H+i)^{-1}[iH,A](H-i)^{-1}\e^{-isH}
$$
exists and satisfies $D=g(H)$ for some $g\in C^2(\R;\R)\cap\linf(\R)$, and set
$\Lambda:=\R\setminus g^{-1}(\{0\})$. Then,
\begin{enumerate}
\item[(a)] $\lim_{t\to\infty}\big\langle\varphi,\e^{-itH}\psi\big\rangle=0$ for each
$\varphi\in\H$ and $\psi\in\chi_\Lambda(H)\hspace{1pt}\H$,
\item[(b)] $H|_{\chi_\Lambda(H)\H}$ has purely continuous spectrum.
\end{enumerate}
\end{Corollary}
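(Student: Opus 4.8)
The plan is to imitate the proof of Corollary~\ref{first_cor_unitary}, replacing the Fourier-series estimate on $\S^1$ by a functional-calculus estimate adapted to the (possibly unbounded) spectrum of $H$. The first step is the spectral identity
$$
\chi_\Lambda(H)\,\H
=\chi_{\R\setminus\{0\}}\big(g(H)\big)\,\H
=\ker\big(g(H)\big)^\perp
=\ker(D)^\perp,
$$
which shows that the assertions (a) and (b) are precisely the conclusions of Theorem~\ref{thm_self}, once one checks the standing hypothesis of that theorem: that $D^{-1}\eta(D)\,\dom(A)\subset\dom(A)$ for every $\eta\in C^\infty_{\rm c}(\R\setminus\{0\})$.

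So I would fix such an $\eta$ and set $\phi(x):=x^{-1}\eta(x)$, which again lies in $C^\infty_{\rm c}(\R\setminus\{0\})$. Since $D=g(H)$, one has $D^{-1}\eta(D)=\phi\big(g(H)\big)=\theta(H)$ with $\theta:=\phi\circ g$; and because $g\in C^2(\R;\R)\cap\linf(\R)$ and $\phi$ is smooth, the function $\theta$ is bounded and of class $C^2(\R)$. Hence everything reduces to proving $\theta(H)\in C^1(A)$, since this already yields $\theta(H)\,\dom(A)\subset\dom(A)$.

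For this last step the natural tool is the Helffer--Sj\"ostrand formula together with the resolvent--commutator identity recalled in Section~\ref{Sec_commutators}. One picks an almost-analytic extension $\widetilde\theta$ of $\theta$, writes $\theta(H)=\tfrac1\pi\int_\C\bar\partial\widetilde\theta(z)\,(H-z)^{-1}\,\d x\,\d y$ with $z=x+iy$, and the crux is to show that $\tfrac1\pi\int_\C\bar\partial\widetilde\theta(z)\,[A,(H-z)^{-1}]\,\d x\,\d y$ converges in operator norm; its value is then $[A,\theta(H)]$. The quantitative inputs available are that $[H,A]$ extends to a continuous operator from $\dom(H)$ to $\dom(H)^*$ -- so that $(H-z)^{-1}[H,A](H-z)^{-1}$ is bounded for $z\notin\R$ -- which follows from $(H-i)^{-1}\in C^1(A)$, and that the $C^2$-regularity of $\theta$ makes $|\bar\partial\widetilde\theta(z)|$ vanish to second order in $|\im z|$ near the real axis. (Alternatively one could route the whole argument through the Cayley transform of Remark~\ref{Rem_self}(b) and run the Fourier-series estimate of Corollary~\ref{first_cor_unitary} for the associated unitary operator; this is cosmetically closer to the discrete case but runs into the same analytic point.)

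The step I expect to be the main obstacle is exactly this convergence. In the unitary setting the compactness of $\S^1$ and the coefficient bound $|c_n|\le{\rm Const.}\,(1+|n|)^{-(2+\gamma)}$ made $\sum_n|c_n|\,\|[A,U^n]\|$ summable with room to spare, whereas here $\theta$ is merely bounded and need not decay at infinity, so the contribution of large $|\re z|$ to the Helffer--Sj\"ostrand integral has to be handled by hand. I would control it by using that $\phi$ vanishes near $0$ and that $g$ is bounded to localise $\theta(H)$ spectrally, and by peeling off factors of the resolvent $(H-i)^{-1}$ -- which is in $C^1(A)$ and preserves $\dom(A)$ -- so as to trade the missing decay of $\theta$ against decay of the regularising factor before invoking the functional-calculus bound. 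Once $\theta(H)\in C^1(A)$ is established, Theorem~\ref{thm_self} applies verbatim and delivers (a) and (b).
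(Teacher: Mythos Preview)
Your reduction to Theorem~\ref{thm_self} via the identity $\chi_\Lambda(H)\H=\ker(D)^\perp$ and the verification that $\theta(H):=(\phi\circ g)(H)\in C^1(A)$ is exactly the paper's approach; the paper simply invokes \cite[Thm.~6.2.5]{ABG96} for this last step rather than reproving it by the Helffer--Sj\"ostrand argument you sketch, and notes in a footnote that your Cayley-transform/Fourier-series alternative works under the stronger hypothesis $g\in C^{2+\gamma}$. The non-decay obstacle you anticipate is real and is precisely what the cited theorem absorbs, so your outline is correct.
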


\begin{proof}
We have
$$
\chi_\Lambda(H)\hspace{1pt}\H
=\chi_{\R\setminus\{0\}}\big(g(H)\big)\hspace{1pt}\H
=\ker\big(g(H)\big)^\perp,
$$
and we have ${g(H)}^{-1}\eta\big(g(H)\big)\dom(A)\subset\dom(A)$ for each
$\eta\in C^\infty_{\rm c}(\R\setminus\{0\})$ due to \cite[Thm.~6.2.5]{ABG96}\footnote{As
in Corollary \ref{first_cor_unitary}, one can give a more direct proof of the
inclusion ${g(H)}^{-1}\eta\big(g(H)\big)\dom(A)\subset\dom(A)$ using Fourier series if
one assumes that $g\in C^{2+\gamma}(\R;\R)\cap\linf(\R)$ with $\gamma\in(0,1)$.}.
So, the claims follow from Theorem \ref{thm_self}.
\end{proof}

The next result is a direct consequence of Remark \ref{Rem_self}(a) and Corollary
\ref{first_cor_self}.

\begin{Corollary}\label{second_cor_self}
Let $H$ and $A$ be self-adjoint operators in a Hilbert space $\H$, with
$(H-i)^{-1}\in C^1(A)$. Assume that $(H+i)^{-1}[iH,A](H-i)^{-1}=g(H)$ for some
$g\in C^2(\R;\R)\cap\linf(\R)$, and set $\Lambda:=\R\setminus g^{-1}(\{0\})$. Then,
$H|_{\chi_\Lambda(H)\H}$ has purely absolutely continuous spectrum.
\end{Corollary}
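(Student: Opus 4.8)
The plan is to exploit the fact that, under the present hypothesis, the time averages entering Corollary \ref{first_cor_self} and Remark \ref{Rem_self}(a) are constant in the averaging parameter, so that both the existence of the strong limit $D$ and the square-integrability condition of Remark \ref{Rem_self}(a) become trivial.

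First I would note that, since $g(H)$ commutes with $\e^{isH}$ for every $s\in\R$, one has $\e^{isH}(H+i)^{-1}[iH,A](H-i)^{-1}\e^{-isH}=\e^{isH}g(H)\e^{-isH}=g(H)$ for all $s$. Hence, for every $t>0$,
\[
\frac1t\int_0^t\d s\,\e^{isH}(H+i)^{-1}[iH,A](H-i)^{-1}\e^{-isH}=g(H),
\]
so the strong limit $D$ exists and equals $g(H)$. In particular, the hypotheses of Corollary \ref{first_cor_self} hold, and from its proof we know that $\ker(D)^\perp=\chi_\Lambda(H)\hspace{1pt}\H$ and that ${g(H)}^{-1}\eta\big(g(H)\big)\dom(A)\subset\dom(A)$ for each $\eta\in C^\infty_{\rm c}(\R\setminus\{0\})$; thus the hypotheses of Theorem \ref{thm_self} are all satisfied.

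Second, I would check the extra assumption of Remark \ref{Rem_self}(a): for each $\psi\in\dom(A)$, the map $t\mapsto\big\|\big(g(H)-D\big)\psi\big\|$ must lie in $\ltwo\big([1,\infty),\d t\big)$. By the computation above this map is identically zero, hence trivially square-integrable on $[1,\infty)$.

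Consequently, Remark \ref{Rem_self}(a) yields that $H|_{\ker(D)^\perp}$ has purely absolutely continuous spectrum, which is exactly the assertion since $\ker(D)^\perp=\chi_\Lambda(H)\hspace{1pt}\H$. There is essentially no obstacle here: the only substantive observation is that conjugation by $\e^{isH}$ fixes $g(H)$, which collapses all the averages; the remaining domain and measurability bookkeeping has already been carried out in Corollary \ref{first_cor_self} and Remark \ref{Rem_self}(a).
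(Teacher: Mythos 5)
Your proposal is correct and is exactly the argument the paper intends when it states that the corollary ``is a direct consequence of Remark \ref{Rem_self}(a) and Corollary \ref{first_cor_self}'': the hypothesis makes every average $D_t$ equal to $g(H)$, so $D=g(H)$ exists trivially and the square-integrability condition of Remark \ref{Rem_self}(a) holds with the integrand identically zero. Nothing further is needed.
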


We close the section with some examples of applications of the previous results.

\begin{Example}[Canonical commutation relation]\label{ex_can_self}
Assume that $g(H)=(H+i)^{-1}g_0(H-i)^{-1}$ with $g_0\in\R\setminus\{0\}$ in Corollary
\ref{second_cor_self}. Then, we have $\chi_\Lambda(H)\hspace{1pt}\H=\H$, and thus Corollary
\ref{second_cor_self} implies that $H$ has purely absolutely continuous spectrum. In
fact, we have in this case the canonical commutation relation $[iH,A/g_0]=1$ which is
equivalent to the Weyl commutation relation
$$
\e^{-itA/g_0}\e^{isH}\e^{itA/g_0}=\e^{ist}\e^{isH},\quad s,t\in\R.
$$
Thus, Stone-von Neumann theorem implies that $H$ has Lebesgue spectrum with uniform
multiplicity.
\end{Example}

\begin{Example}[Homogeneous commutation relation]\label{ex_hom}
Assume that $g(H)=(H+i)^{-1}g_0H(H-i)^{-1}$ with $g_0\in\R\setminus\{0\}$ in Corollary
\ref{second_cor_self}. Then, we have $\chi_\Lambda(H)\hspace{1pt}\H=\ker(H)^\perp$, and thus
Corollary \ref{second_cor_self} implies that $H|_{\ker(H)^\perp}$ has purely
absolutely continuous spectrum. In fact, we have in this case the homogeneous
commutation relation $[iH,A/g_0]=H$ which is equivalent to the commutation relation
$$
\e^{-itA/g_0}\e^{isH}\e^{itA/g_0}=\e^{ise^tH},\quad s,t\in\R,
$$
and it is known that this implies that $H|_{\ker(H)^\perp}$ has Lebesgue spectrum with
uniform multiplicity if there exists a unitary operator $W$ in $\H$ such that
$W\e^{-itA/g_0}W^{-1}=\e^{itA/g_0}$ for each $t\in\R$ (see \cite[Sec.~4]{AP72}). This
occurs for instance when $A$ and $H$ are respectively the generator of dilations and
the Laplacian in $\R^d$ (see \cite[Sec.~1.2]{ABG96}), or when $A$ and $H$ are
respectively the generator of the geodesic flow and the generator of the horocycle
flow on a compact surface of constant negative curvature (see \cite[Sec.~4]{AP72}).
\end{Example}

\begin{Example}[Adjacency operators on admissible graphs]\label{ex_graphs}
Let $(X,\sim)$ be a graph, with symmetric relation $\sim$, of finite degree and with
no multiple edges or loops, and let $\H:=\ell^{\;\!2}(X)$ be the corresponding Hilbert
space. Then, the adjacency operator on $(X,\sim)$ defined by
$$
(H\varphi)(x):=\sum_{y\sim x}\varphi(y),\quad\varphi\in\H,~x\in X,
$$
is a bounded self-adjoint operator in $\H$. A directed graph $(X,<)$ subjacent to
$(X,\sim)$ is the graph $(X,\sim)$ together with a relation $<$ on $X$ such that, for
each $x,y\in X$, $x\sim y$ is equivalent to $x<y$ or $y<x$, and one cannot have both
$y<x$ and $x<y$. For each $x\in X$, we define $N^-(x):=\{y\in X\mid x<y\}$ and
$N^+(x):=\{y\in X\mid y<x\}$, and note that $N(x):=N^-(x)\cup N^+(x)$ is the set of
neighbours of $x$. When using drawings, we draw an arrow $x\leftarrow y$ if $x<y$. The
following definition is taken from \cite{MRT07}\;\!: A directed graph $(X,<)$ is
called admissible if
\begin{enumerate}
\item[(i)] each closed path in $X$ has the same number of positively and negatively
oriented edges
\item[(ii)] for each $x,y\in X$, one has\,
$\#\{N^-(x)\cap N^-(y)\}=\#\{N^+(x)\cap N^+(y)\}$.
\end{enumerate}

\begin{figure}[htbp]
\begin{center}
\includegraphics[width=320pt]{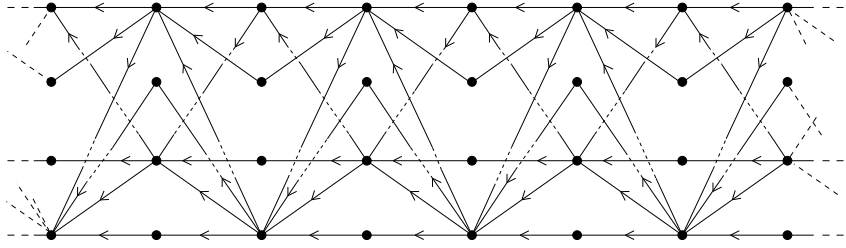}
\vspace{-10pt}
\caption{Example of an admissible graph}
\vspace{-5pt}
\end{center}
\end{figure}

Commutator calculations show that if $H$ is the adjacency operator on an admissible
graph $(X,<)$, then there exist a self-adjoint operator $A$ and a bounded self-adjoint
operator $K$ in $\H$ such that $H\in C^1(A)$ with $[iH,A]=K^2$, $[K,H]=0$, and
$K^2\in C^1(A)$ (see \cite{MRT07}). Therefore, one has
$$
D:=\slim_{t\to\infty}\frac1t\int_0^t\d s\,\e^{isH}(H+i)^{-1}[iH,A](H-i)^{-1}\e^{-isH}
=K^2(H^2+1)^{-1},
$$
and
$$
D^{-1}\eta(D)\;\!\dom(A)
=\big(K^2(H^2+1)^{-1}\big)^{-1}\eta\big(K^2(H^2+1)^{-1}\big)\;\!\dom(A)
\subset\dom(A)
$$
for each $\eta\in C^\infty_{\rm c}(\R\setminus\{0\})$ due to
\cite[Prop.~5.1.5~\&~Thm.~6.2.5]{ABG96}. Since
$$
\ker(D)
=\ker\big(K^2(H^2+1)^{-1}\big)
=\ker(K^2)
=\ker(K),
$$
we thus conclude by Theorem \ref{thm_self}(b) and Remark \ref{Rem_self}(a) that
$H|_{\ker(K)^\perp}$ has purely absolutely continuous spectrum.

This result was first proved in \cite[Thm.~1.1]{MRT07}. Even so, it is of interest
because the proof here is much simpler and shorter than in \cite{MRT07}. Similar
simplifications can also be obtained for the spectral analysis of  convolution
operators on locally compact groups as presented in \cite{MT07}. Details are left to
the reader.
\end{Example}

\begin{Example}[Time changes of horocycle flows]\label{ex_horocycle}
Let $\Sigma$ be a compact Riemann surface of genus $\ge2$ and let $M:=T^1\Sigma$ be
the unit tangent bundle of $\Sigma$. The compact $3$-manifold $M$ carries a
probability measure $\mu_\Omega$, induced by a canonical volume form $\Omega$, which
is preserved by two distinguished one-parameter groups of diffeomorphisms\;\!: the
horocycle flow $F_1\equiv\{F_{1,t}\}_{t\in\R}$ and the geodesic flow
$F_2\equiv\{F_{2,t}\}_{t\in\R}$. Each flow admits a self-adjoint generator $H_j$ in
$\ltwo(M,\mu_\Omega)$ which is essentially self-adjoint on $C^\infty(M)$ and given by
$$
H_j\hspace{1pt}\varphi:=-i\;\!\L_{X_j}\varphi,\quad\varphi\in C^\infty(M),
$$
with $X_j$ the divergence-free vector field associated to $F_j$ and $\L_{X_j}$ the
corresponding Lie derivative. Moreover, $H_1$ is of class $C^1(H_2)$ with
$[iH_1,H_2]=H_1$.

Now, consider a $C^1$ vector field proportional to $X_1$, namely, a vector field
$fX_1$ with $f\in C^1\big(M;(0,\infty)\big)$. The vector field $fX_1$ admits a
complete flow $\widetilde F_1\equiv\{\widetilde F_{1,t}\}_{t\in\R}$ with generator
$H:=fH_1$ essentially self-adjoint on $C^1(M)\subset\H:=\ltwo(M,f^{-1}\mu_\Omega)$.
Furthermore, Lemmas 3.1 and 3.2 of \cite{Tie13} imply the following\hspace{1pt}: The
operator $A:=f^{1/2}H_2f^{-1/2}$ is essentially self-adjoint on $C^1(M)\subset\H$, and
$(H-i)^{-1}\in C^1(A)$ with
$$
(H+i)^{-1}[iH,A](H-i)^{-1}=(H+i)^{-1}\big(H\;\!\xi+\xi\;\!H\big)(H-i)^{-1}
\quad\hbox{and}\quad
\xi:=\frac12-\frac12\;\!f^{-1}\L_{X_2}(f).
$$
It follows for $t>0$ that
$$
D_t
:=\frac1t\int_0^t\d s\,\e^{isH}(H+i)^{-1}[iH,A](H-i)^{-1}\e^{-isH}
=(H+i)^{-1}\big(H\;\!\xi_t+\xi_t\;\!H\big)(H-i)^{-1}
$$
with
$$
\xi_t
:=\frac1t\int_0^t\d s\,\e^{isH}\xi\e^{-isH}
=\frac1t\int_0^t\d s\,\big(\xi\circ\widetilde F_{1,-s}\big).
$$
Since the flow $F_1$ is uniquely ergodic with respect to the measure $\mu_\Omega$
\cite[p.~114]{Fur73}, the time changed flow $\widetilde F_1$ is uniquely ergodic with
respect to the measure
$\widetilde\mu_\Omega:=\frac{f^{-1}\mu_\Omega}{\int_Mf^{-1}\d\mu_\Omega}$
\cite[Prop.~3]{Hum74}. Therefore,
$$
\lim_{t\to\infty}\xi_t
=\frac12-\frac12\int_M\d\widetilde\mu_\Omega\;\!f^{-1}\L_{X_2}(f)
=\frac12+\frac1{2\int_Mf^{-1}\d\mu_\Omega}\int_M\d\mu_\Omega\;\!\L_{X_2}\big(f^{-1}\big)
=\frac12
$$
uniformly on $M$, and
$$
D:=\lim_{t\to\infty}D_t\textstyle=(H+i)^{-1}
\left(H\cdot\frac12+\frac12\cdot H\right)(H-i)^{-1}
=H(H^2+1)^{-1}
\quad\hbox{in}\quad\B(\H).
$$
So, Corollary \ref{first_cor_self} applies with $g(H)=H(H^2+1)^{-1}$ and
$\Lambda=\R\setminus\{0\}$, and thus
$\lim_{t\to\infty}\big\langle\varphi,\e^{-itH}\psi\big\rangle=0$ for each
$\varphi\in\H$ and $\psi\in\ker(H)^\perp$. This shows that the time changed horocycle
flow $\widetilde F_1$ associated to the vector field $fX_1$ is strongly mixing.

Thus, we have obtained a very simple proof that all time changes of horocycle flows of
class $C^1$ on compact surfaces of constant negative curvature are strongly mixing. We
refer to the work of A. G. Kushnirenko \cite{Kus74} for the same result under a
smallness assumption on $\L_{X_2}(f)$, and to the work of B. Marcus \cite{Mar77} for
the mixing properties of a general class of parametrisations of horocycle flows on
compact surfaces of negative curvature. We also refer to the works of G. Forni and C.
Ulcigrai \cite{FU12} and of the author \cite{Tie13,Tie12} for recent results on the
asolutely continuous spectrum of time changes of horocycle flows in the case of
constant curvature.
\end{Example}

%--------------------------------------------------------------------------------------
%\bibliography{../bibliographie/bibliographie}

\begin{thebibliography}{10}

\bibitem{AG93}
N.~I. Akhiezer and I.~M. Glazman.
\newblock {\em Theory of linear operators in {H}ilbert space}.
\newblock Dover Publications, Inc., New York, 1993.
\newblock Translated from the Russian and with a preface by Merlynd Nestell,
  Reprint of the 1961 and 1963 translations, Two volumes bound as one.

\bibitem{ABG96}
W.~O. Amrein, A.~Boutet~de Monvel, and V.~Georgescu.
\newblock {\em {$C_0$}-groups, commutator methods and spectral theory of
  {$N$}-body {H}amiltonians}, volume 135 of {\em Progress in Mathematics}.
\newblock Birkh\"auser Verlag, Basel, 1996.

\bibitem{AP72}
R.~Azencott and W.~Parry.
\newblock Stability of group representations and {H}aar spectrum.
\newblock {\em Trans. Amer. Math. Soc.}, 172:317--327, 1972.

\bibitem{BtD85}
T.~Br{\"o}cker and T.~tom Dieck.
\newblock {\em Representations of compact {L}ie groups}, volume~98 of {\em
  Graduate Texts in Mathematics}.
\newblock Springer-Verlag, New York, 1985.

\bibitem{FRT13}
C.~Fern\'andez, S.~Richard, and R.~Tiedra~de Aldecoa.
\newblock Commutator methods for unitary operators.
\newblock {\em J. Spectr. Theory}, 3(3):271--292, 2013.

\bibitem{FU12}
G.~Forni and C.~Ulcigrai.
\newblock Time-changes of horocycle flows.
\newblock {\em J. Mod. Dyn.}, 6(2):251--273, 2012.

\bibitem{Fra00}
K.~Fr{\polhk{a}}czek.
\newblock Circle extensions of {${\bf Z}^d$}-rotations on the {$d$}-dimensional
  torus.
\newblock {\em J. London Math. Soc. (2)}, 61(1):139--162, 2000.

\bibitem{Fra00_2}
K.~Fr{\polhk{a}}czek.
\newblock On cocycles with values in the group {${\rm SU}(2)$}.
\newblock {\em Monatsh. Math.}, 131(4):279--307, 2000.

\bibitem{Fur73}
H.~Furstenberg.
\newblock The unique ergodicity of the horocycle flow.
\newblock In {\em Recent advances in topological dynamics ({P}roc. {C}onf.,
  {Y}ale {U}niv., {N}ew {H}aven, {C}onn., 1972; in honor of {G}ustav {A}rnold
  {H}edlund)}, pages 95--115. Lecture Notes in Math., Vol. 318. Springer,
  Berlin, 1973.

\bibitem{Gra08}
L.~Grafakos.
\newblock {\em Classical {F}ourier analysis}, volume 249 of {\em Graduate Texts
  in Mathematics}.
\newblock Springer, New York, second edition, 2008.

\bibitem{Hum74}
P.~D. Humphries.
\newblock Change of velocity in dynamical systems.
\newblock {\em J. London Math. Soc. (2)}, 7:747--757, 1974.

\bibitem{ILR93}
A.~Iwanik, M.~Lema{\'n}czyk, and D.~Rudolph.
\newblock Absolutely continuous cocycles over irrational rotations.
\newblock {\em Israel J. Math.}, 83(1-2):73--95, 1993.

\bibitem{Kus74}
A.~G. Kushnirenko.
\newblock Spectral properties of certain dynamical systems with polynomial
  dispersal.
\newblock {\em Moscow Univ. Math. Bull.}, 29(1):82--87, 1974.

\bibitem{MRT07}
M.~M{\u{a}}ntoiu, S.~Richard, and R.~Tiedra~de Aldecoa.
\newblock Spectral analysis for adjacency operators on graphs.
\newblock {\em Ann. Henri Poincar\'e}, 8(7):1401--1423, 2007.

\bibitem{MT07}
M.~M{\u{a}}ntoiu and R.~Tiedra~de Aldecoa.
\newblock Spectral analysis for convolution operators on locally compact
  groups.
\newblock {\em J. Funct. Anal.}, 253(2):675--691, 2007.

\bibitem{Mar77}
B.~Marcus.
\newblock Ergodic properties of horocycle flows for surfaces of negative
  curvature.
\newblock {\em Ann. of Math. (2)}, 105(1):81--105, 1977.

\bibitem{Par81}
W.~Parry.
\newblock {\em Topics in ergodic theory}, volume~75 of {\em Cambridge Tracts in
  Mathematics}.
\newblock Cambridge University Press, Cambridge, 1981.

\bibitem{RS79}
M.~Reed and B.~Simon.
\newblock {\em Methods of modern mathematical physics. {III}}.
\newblock Academic Press [Harcourt Brace Jovanovich Publishers], New York,
  1979.
\newblock Scattering theory.

\bibitem{Sug90}
M.~Sugiura.
\newblock {\em Unitary representations and harmonic analysis}, volume~44 of
  {\em North-Holland Mathematical Library}.
\newblock North-Holland Publishing Co., Amsterdam, second edition, 1990.
\newblock An introduction.

\bibitem{Tie14}
R.~Tiedra~de Aldecoa.
\newblock The absolute continuous spectrum of skew products of compact lie
  groups.
\newblock to appear in Israel J. Math., preprint on
  \texttt{http://arxiv.org/abs/1307.7348}.

\bibitem{Tie13}
R.~Tiedra~de Aldecoa.
\newblock Commutator methods for the spectral analysis of uniquely ergodic
  dynamical systems.
\newblock to appear in Ergodic Theory Dynam. Systems, preprint on
  \texttt{http://arxiv.org/abs/1208.3784}.

\bibitem{Tie12}
R.~Tiedra~de Aldecoa.
\newblock Spectral analysis of time changes of horocycle flows.
\newblock {\em J. Mod. Dyn.}, 6(2):275--285, 2012.

\bibitem{Wei80}
J.~Weidmann.
\newblock {\em Linear operators in {H}ilbert spaces}, volume~68 of {\em
  Graduate Texts in Mathematics}.
\newblock Springer-Verlag, New York, 1980.
\newblock Translated from the German by Joseph Sz{\"u}cs.

\end{thebibliography}
%--------------------------------------------------------------------------------------

\def\polhk#1{\setbox0=\hbox{#1}{\ooalign{\hidewidth
  \lower1.5ex\hbox{`}\hidewidth\crcr\unhbox0}}}
  \def\polhk#1{\setbox0=\hbox{#1}{\ooalign{\hidewidth
  \lower1.5ex\hbox{`}\hidewidth\crcr\unhbox0}}}
  \def\polhk#1{\setbox0=\hbox{#1}{\ooalign{\hidewidth
  \lower1.5ex\hbox{`}\hidewidth\crcr\unhbox0}}} \def\cprime{$'$}
  \def\cprime{$'$} \def\polhk#1{\setbox0=\hbox{#1}{\ooalign{\hidewidth
  \lower1.5ex\hbox{`}\hidewidth\crcr\unhbox0}}}
  \def\polhk#1{\setbox0=\hbox{#1}{\ooalign{\hidewidth
  \lower1.5ex\hbox{`}\hidewidth\crcr\unhbox0}}} \def\cprime{$'$}
  \def\cprime{$'$}

%--------------------------------------------------------------------------------------

\end{document}